\numberwithin{equation}{section}
\newtheorem{theorem}{\sc Theorem}[section]
\newtheorem{lemma}{\sc Lemma}[section]
\newtheorem{proposition}{\sc Proposition}[section]
\newtheorem{definition}{\sc Definition}[section]
\newcommand{\set}[1]{ \{#1\} } 
\newcommand{\bset}[1]{ [#1] }
\newcommand{\pset}[1]{ (#1) }
\newcommand{\Pset}[1]{ \left(#1\right) } 
\newcommand{\norm}[1]{ \|#1\| }
\newcommand{\abs}[1]{ |#1| } 
\newcommand{\Abs}[1]{\left |#1\right| }
\newcommand{\cp}[1]{,_{#1}}
\def\hd{\bar{\partial}}
\def\local{l} 
\def\UL{U_\local}
\def\thetal{\theta_\local}
\def\p{\partial}
\def\eps{{\epsilon}}
\def\uu{u_0^\epsilon}
\numberwithin{figure}{section}
\title[Splash singularity for the Navier-Stokes equatins]
{On the Splash Singularity for the  free-surface of a Navier-Stokes fluid}
\author[D. Coutand]{Daniel Coutand}
\address{CANPDE, Maxwell Institute for Mathematical Sciences and department of Mathematics,
Heriot-Watt University, Edinburgh, EH14 4AS, UK}
\email{D.Coutand@ma.hw.ac.uk}
\author[S. Shkoller]{Steve Shkoller}
\address{Department of Mathematics,
University of California,
Davis, CA 95616}
\email{shkoller@math.ucdavis.edu}
\subjclass{35Q30}
\keywords{splash singularity, Navier-Stokes equations, water waves, blow-up, interface singularity}
\begin{document}

\begin{abstract} In fluid dynamics,
an interface {\it splash} singularity occurs when a locally smooth interface self-intersects in finite time.  We prove that for $d$-dimensional flows, $d=2$ 
or $3$, the free-surface of a viscous water wave, modeled by the incompressible Navier-Stokes equations with moving free-boundary, has a 
finite-time splash singularity.   In particular, we prove that given a sufficiently smooth initial boundary and divergence-free velocity field, the interface 
will  self-intersect in  finite time.
\end{abstract}

\maketitle

%\tableofcontents

\section{Introduction}
%\subsection{The interface splash singularity}
%The fluid interface  {\it splash singularity}   was introduced by  Castro, C\'{o}rdoba,  Fefferman, Gancedo, \& G\'{o}mez-Serrano in \cite{CaCoFeGaGo2013}.   
%A {\it splash singularity} occurs when a fluid interface remains locally smooth but self-intersects in finite time.     For the two-dimensional water waves problem,  Castro, C\'{o}rdoba,  Fefferman, Gancedo, \& G\'{o}mez-Serrano
%\cite{CaCoFeGaGo2013} showed that a splash singularity occurs in finite time using methods from complex analysis together with a clever transformation of the equations.
%In  Coutand \& Shkoller \cite{CoSh2014}, we   showed the existence of a finite-time splash singularity for the water waves equations in two or three-dimensions (and, more generally, for the one-phase Euler equations), using a very different 
%approach, founded  upon an approximation of the self-intersecting fluid domain by a sequence of smooth fluid domains, each with non  self-intersecting
%boundary. 

\subsection{The interface splash singularity}
The fluid interface  {\it splash singularity}   was introduced by  Castro, C\'{o}rdoba,  Fefferman, Gancedo, \& G\'{o}mez-Serrano in \cite{CaCoFeGaGo2013}
in the context of  the one-phase water waves problem.  As shown in Figure \ref{fig1},
A {\it splash singularity} occurs when a fluid interface remains locally smooth but self-intersects in finite time.  Using methods from complex analysis
together with a clever transformation of the equations,  Castro, C\'{o}rdoba,  Fefferman, Gancedo, \& G\'{o}mez-Serrano
\cite{CaCoFeGaGo2013} showed that a splash singularity occurs in finite time for the
water waves equations.
In  Coutand \& Shkoller \cite{CoSh2014a}, we  showed the existence of a finite-time splash singularity for the one-phase incompressible Euler  equations with free-boundary using a very different 
approach, founded  upon an approximation of the self-intersecting fluid domain by a sequence of smooth fluid domains, each with non  self-intersecting
boundary.     For one-phase flow, it is the vacuum state on one side of the interface which permits this finite-time interface self-intersection, and
neither surface tension nor magnetic fields nor other inviscid regularizations of the interface change this fact  \cite{CaCoFeGaGo2012,CoSh2014a}, and 
even stationary solutions, having a splash singularity, have been shown to exist (see C\'{o}rdoba, Enciso, \& Grubic \cite{CoEnGr}).

\begin{figure}[htbp]
\begin{center}
\includegraphics[scale = 0.4]{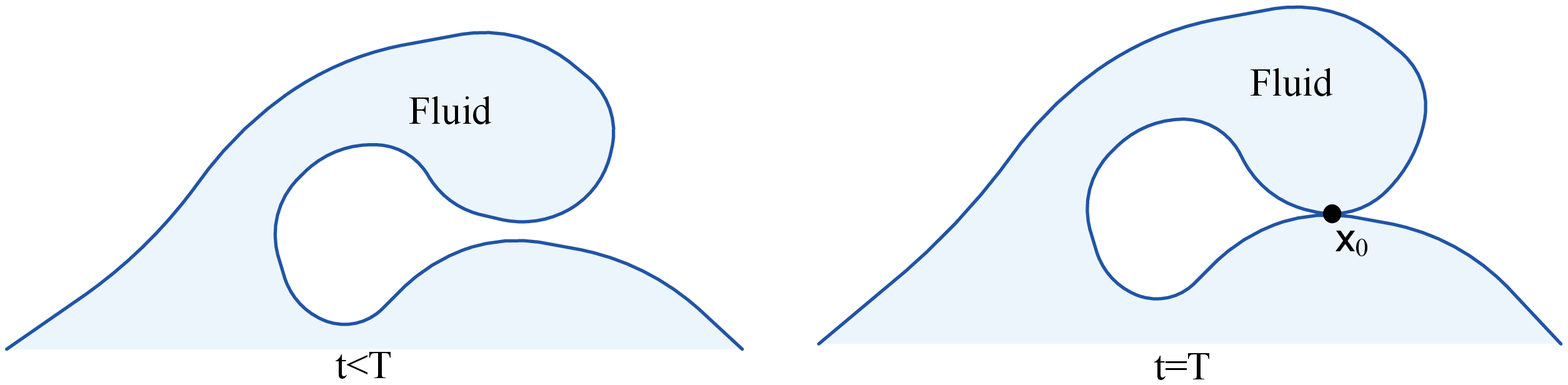}
\caption{The splash singularity at a point $x_0$ occurs when a locally smooth interface self-intersects in finite time
$t=T$.}
\end{center}
\label{fig1}
\end{figure}

On the other hand, for the two-phase incompressible Euler equations, wherein the moving interface is a vortex sheet\footnote{For the vortex sheet problem, it is necessary to have  surface tension in order to ensure well-posedness in Sobolev spaces.},  it was proven by
 Fefferman, Ionescu,  \& Lie \cite{FeIoLi2013} and Coutand \& Shkoller \cite{CoSh2014b} that a splash singularity cannot occur in finite-time while the 
 interface remains locally smooth.   In particular, there is a fundamental difference in the behavior of the fluid interface when vacuum is replaced with fluid
 in the mathematical model.

Since these results have been established for inviscid flows, it is natural to ask if splash singularities can occur for viscous flows modeled by the 
incompressible Navier-Stokes equations with a moving free-surface.    Because the methods of constructing splash singularities for inviscid flows have
relied on the ability to flow backward-in-time, a new strategy must be devised to study the parabolic Navier-Stokes equations.   
By using the change-of-variables employed in \cite{CaCoFeGaGo2013} together with stability estimates,  
Castro, C\'{o}rdoba,  Fefferman, Gancedo, \& G\'{o}mez-Serrano in  \cite{CaCoFeGaGo2015}  have 
shown the existence of finite-time splash singularities for the Navier-Stokes equations.   Herein, we give a different proof which is amenable to any
space dimension $d\ge 2$.

\subsection{The Eulerian description of the Navier-Stokes free-boundary problem}
For $0 \le t \le T$,
the evolution of a $d$-di\-men\-si\-o\-nal ($d=2$ or $3$)  one-phase, incompressible, viscous fluid
with a moving free boundary  is modeled by the in\-com\-pres\-sib\-le Navier-Stokes equations:
\begin{subequations}
  \label{NSe}
\begin{alignat}{2}
u_t+ u\cdot \nabla u +  \nabla  p&= \nu \Delta u  \ \  \ &&\text{in} \ \ \Omega(t) \,,\\
  {\operatorname{div}} u &=0
&&\text{in} \ \ \Omega(t) \,, \\
\nu \operatorname{Def} u \cdot n - p\, n &= 0 \ \ &&\text{on} \ \ \Gamma(t) \,, \\
\mathcal{V} (\Gamma(t))& = u \cdot n &&\ \ \\
u   &= u_0  \ \  &&\text{on} \ \ \Omega(0) \,,\\
   \Omega(0) &= \Omega_0\,.  && 
\end{alignat}
\end{subequations}
The open subset
 $\Omega(t) \subset \mathbb{R}^d  $, $d=2$ or $3$, denotes the time-dependent volume occupied by the fluid,  $\Gamma(t):= \partial\Omega(t)$ denotes
 the moving free-surface, $ \mathcal{V} (\Gamma(t))$ denotes normal
 velocity of $\Gamma(t)$, and $n(t)$ denotes the exterior unit normal vector to the free-surface  $\Gamma(t)$.
  The vector-field $u = (u_1,.., u_d)$ denotes the Eulerian velocity
field, and $p$ denotes the pressure function. We use the notation $\nabla =(\p_1, ..., \p_d)$ to denote the gradient operator, and set
$\operatorname{Def} u = \nabla u +  \nabla u^T$, twice the symmetric part of the gradient of velocity. We have
normalized the equations to have all physical constants equal to 1.

The pressure $p$  is a solution to the following Dirichlet problem:
\begin{subequations}
  \label{p}
\begin{alignat}{2}
- \Delta p  &=  u^i,_j u^j,_i   \ \  \ &&\text{in} \ \ \Omega(t) \,,\\
 p &=  n \cdot \left[ \nu \operatorname{Def} u \cdot n \right] \ \ &&\text{on} \ \ \Gamma(t)  \,,
\end{alignat}
\end{subequations}
so that given an initial domain $\Omega$ and an initial velocity field $u_0$, the initial pressure is obtained as the solution of (\ref{p}) at $t=0$.

\begin{definition}Given a locally smooth, time-dependent fluid interface or free-boundary,   if there exists a time $T< \infty $ such that the
interface $\Gamma(T)$ self-intersects at a point while remaining locally smooth, we call this point of self-intersection at time $T$ a ``splash'' singularity.\end{definition} 

We prove that there exist smooth initial data for the  Navier-Stokes equations (\ref{NSe})  for which such a splash singularity occurs in finite time.

\subsection{Statement of the Main Theorem}\label{sec:maintheorem}

\begin{theorem}[Finite-time splash singularity]\label{theorem_main}  There exist 
\begin{enumerate}
\item  open bounded $C^ \infty $-class initial domains $\Omega\subset \mathbb{R}  ^d$, $d=2$ or $3$,
with $N$ denoting the unit normal vector field on $\p \Omega $, and
\item  smooth divergence-free velocity fields  $u_0$ satisfying the compatibility condition 
$$\left[ \operatorname{Def} u_0 \cdot N\right] \times N =0 \text{ on } \p \Omega \,,$$
\end{enumerate}
such that after a finite time $T^*>0$, the solution to the 
Navier-Stokes equations (\ref{NSe}) has a splash singularity; that is,  the interface $\Gamma(T^*)$ self-intersects.
\end{theorem}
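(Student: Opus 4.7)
The strategy is to adapt the desingularization approach of \cite{CoSh2014a} developed for the Euler free-boundary problem, replacing the inviscid time-reversal argument — which is unavailable for the parabolic system \eqref{NSe} — with a direct forward-in-time construction on a family of non-self-intersecting approximating configurations. First, specify a smooth ``target'' splash domain $\Omega^*\subset\mathbb{R}^d$ whose boundary $\Gamma^*$ is smooth except at a single point $x_0$ where two smooth sheets of $\Gamma^*$ meet tangentially, together with a smooth divergence-free ``target'' velocity $u^*$ on $\overline{\Omega^*}$ satisfying the compatibility condition on $\Gamma^*$ and driving the two sheets into contact. For each $\epsilon>0$, let $\Omega_\epsilon$ be a smooth, non-self-intersecting desingularization of $\Omega^*$, obtained by lifting one of the two approaching sheets near $x_0$ by an amount $O(\epsilon)$, and fix on $\Omega_\epsilon$ a smooth target velocity $u^*_\epsilon$ close to $u^*$ and compatible with $\Gamma_\epsilon=\partial\Omega_\epsilon$.

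Next, for each $\epsilon$, construct smooth initial data $(u_0^\epsilon,\Omega_0^\epsilon)$ whose forward Navier-Stokes evolution reaches $(u^*_\epsilon,\Omega_\epsilon)$ at time $T^*$. Since backward parabolic evolution is ill-posed, this step is carried out via a fixed-point or degree-theoretic argument in the Lagrangian formulation on a fixed reference domain: linearize about an approximate near-splash trajectory and use the parabolic regularity of the associated Stokes-type boundary value problem, iterated over short sub-intervals of $[0,T^*]$, to produce the required initial data; the compatibility condition on $u_0^\epsilon$ is enforced by construction. Then derive uniform-in-$\epsilon$ parabolic energy estimates for $(u^\epsilon,\eta^\epsilon)$ in Lagrangian variables on $\Omega_\epsilon\times[0,T^*]$. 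The crucial observation is that the coercivity of the viscous Lagrangian operator depends only on the pulled-back metric $g^\epsilon=(\nabla\eta^\epsilon)^T\nabla\eta^\epsilon$ and on the intrinsic boundary curvatures of $\Gamma_\epsilon$, both of which remain bounded as $\epsilon\to 0$ independently of the $O(\epsilon)$ Euclidean gap between the two approaching sheets. Extracting a limit $\epsilon\to 0$ yields smooth initial data $(u_0,\Omega_0)$ whose Navier-Stokes solution remains smooth on $[0,T^*)$ but whose interface $\Gamma(T^*)$ self-intersects at $x_0$, establishing Theorem \ref{theorem_main}.

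\textbf{Principal obstacle.} The forward-in-time construction of the initial data is the main difficulty: time reversibility trivializes the analogous step in the inviscid case \cite{CoSh2014a}, whereas here the ill-posedness of the backward Navier-Stokes flow forces a delicate iterative or quasi-reversibility argument to realize prescribed near-splash states at $T^*$. Equally subtle is ensuring that the energy estimates see only the local boundary geometry of $\Gamma_\epsilon$ — so that the constants do not degenerate as the two approaching sheets come within $O(\epsilon)$ of one another — a feature tied to the fact that the Neumann-type boundary condition in \eqref{NSe} depends only on normal and tangential derivatives of $u$ at $\Gamma$, not on the global geometry of the domain. Making these two properties quantitative, and reconciling them with one another so that the limit $\epsilon\to 0$ produces genuine smooth initial data rather than degenerate ones, is the heart of the argument.
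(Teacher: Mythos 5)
Your proposal contains a genuine gap at its central step, and it also diverges from the paper's actual strategy in a way worth noting.

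The gap is in the step ``construct smooth initial data $(u_0^\epsilon,\Omega_0^\epsilon)$ whose forward Navier-Stokes evolution reaches $(u^*_\epsilon,\Omega_\epsilon)$ at time $T^*$.'' You are proposing to solve a finite-time exact controllability (or quasi-reversibility) problem for the free-boundary Navier-Stokes system: prescribe a target configuration at time $T^*$ and find initial data that flows forward onto it. Even for the fixed-domain Navier-Stokes equations, exact controllability to a prescribed smooth final state is a deep and delicate matter; for the free-boundary problem it is not a routine consequence of parabolic regularity or a straightforward fixed-point argument. Your appeal to ``a fixed-point or degree-theoretic argument'' and ``quasi-reversibility'' does not resolve this, and in fact you flag it yourself as the ``main difficulty'' without supplying the content needed to make it go through. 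In addition, the final $\epsilon\to0$ limit requires that the initial data $(u_0^\epsilon,\Omega_0^\epsilon)$ produced by this controllability step converge (and remain nondegenerate), which is an extra uniformity requirement layered on top of an already unestablished construction.

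The paper avoids this entirely by a purely forward-in-time, explicit construction that never prescribes a final state and never passes to a limit in $\epsilon$. One fixes a small $\epsilon>0$ once and for all, and that almost-splash domain $\Omega^\epsilon$ \emph{is} the initial domain. The initial velocity $u_0^\epsilon$ is obtained by solving a Stokes system (\ref{Stokes2}) with prescribed normal boundary velocity: $-1$ near the lowest point $X_+^\epsilon$ of the descending tip, $0$ on the flat floor, the compatibility condition built into the tangential boundary condition. Uniform-in-$\epsilon$ energy estimates (Theorem \ref{prop1}) give a time of existence $T$ independent of $\epsilon$, and the key quantitative ingredient is Proposition \ref{prop2} together with the normal-trace estimate (\ref{cs200}): the normal component of $v(\cdot,t)-u_0^\epsilon$ on the boundary is $O(t^{1/4})$. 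Thus on the time scale $t=10\epsilon\ll T$ the tip keeps falling at speed $\approx -1$ while the floor barely moves, vertically or horizontally, so the tip must cross the floor at some $T^*\in(0,10\epsilon)$. No backward construction, no controllability, no $\epsilon\to0$ limit of solutions. Your overall picture of desingularizing the splash geometry and proving $\epsilon$-uniform estimates is in the right spirit and does match the paper's Sections \ref{sec::dino_wave}--\ref{section7}; what is missing is the realization that one should prescribe \emph{initial} data with a built-in downward velocity and then exploit short-time persistence of the velocity field, rather than prescribe a \emph{terminal} state and try to run the parabolic flow backward to it.
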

In Theorem \ref{thm_general}, we show that  the geometry of such a splash singularity can be prescribed arbitrarily close (in the $H^3$ norm) to any
sufficiently smooth and prescribed self-intersecting domain.

\subsection{Prior results for the incompressible Navier-Stokes equations with moving free-surface}
Local-in-time  well-posedness of  solutions to (\ref{NSe}) have been known since
the pioneering work of  Solonnikov \cite{Sol1977, Sol1991, Sol1992}; his proof did not rely on energy estimates, but rather on Fourier-Laplace 
transform techniques, which required the use of exponentially weighted anisotropic Sobolev-Slobodeskii spaces with only fractional-order spatial derivatives for the analysis.     Beale \cite{Beale1981} proved local well-posedness in a similar functional framework, and Abels \cite{Abels2005} 
established the existence theory in the $L^p$ Sobolev space framework.   Well-posedness in energy spaces was established by
Coutand \& Shkoller  in \cite{CoSh2002} for the case of surface tension on the free-boundary, and for Navier-Stokes fluid-structure interaction problems
wherein a viscous fluid is coupled to an elastic solid, in
 \cite{CoSh2005,CoSh2006}.  
Guo \& Tice \cite{GuTi2013a} also used energy spaces for local well-posed for the case of zero surface tension.   

Beale \cite{Beale1983} established global existence of solutions to (\ref{NSe})  for small perturbations of equilibrium.   More recent small-data global
existence and decay results (both with and without surface tension) can be found in \cite{TaTa1995}, \cite{PaSo2000}, \cite{NiTeYo2004}, \cite{Hataya2009}, \cite{Bae2011}, and \cite{GuTi2013b,GuTi2013c}.   Recent results on the limit of zero viscosity and the limit of zero surface tension 
can be found in \cite{MaRo2012}, \cite{ElLe2014}, and \cite{WaXi2015}.

For the history of the well-posedness and singularity theory for the inviscid problem, we refer the reader to the introduction in \cite{CoSh2007} and 
\cite{CoSh2014b}.

\textcolor{black}{ 
\subsection{Outline of the paper}
In Section \ref{sec:notation}, we define our notation.   In Section \ref{sec::dino_wave}, we define a sequence of domains $ \Omega^\epsilon $ that we 
use as the initial data
for the splash singularity, wherein the boundary $\Gamma^ \epsilon $ of these domains  is close to self-intersection with a distance $ \epsilon $  between 
two approaching portions of $\Gamma^ \epsilon $.  We convert the Navier-Stokes equations to Lagrangian coordinates in Section \ref{sec::lagrangian}, 
thus fixing the domain.   In Section \ref{sec5}, we present some preliminary lemmas which show that the constant appearing in elliptic estimates and
the Sobolev embedding theorem is independent of $ \epsilon $.   In Section \ref{sec6}, we define the sequence of initial divergence-free velocity fields 
that are guaranteed to satisfy the single compatibility condition that we require, and whose norm is independent of $ \epsilon $.   Section \ref{section7} 
is devoted to the basic a priori estimates for the Navier-Stokes equations in Lagrangian coordinates; following our approach in \cite{CoSh2002}, we
establish estimates for velocity $v \in L^2(0,T;  H^3(\Omega^ \epsilon ) ) \cap C^0([0,T];  H^2(\Omega^ \epsilon ) ) $ which are independent of $ \epsilon $.
We then prove that the vertical component of velocity $v( \cdot t)$ at time $t$ remains in an $O( t^ {\frac{1}{4}} )$ neighborhood
of the vertical component of the initial velocity field.   Using this fact, we prove the main theorem in Section \ref{section8}; we show that by choosing
$ \epsilon $ appropriately, a finite-time splash  singularity must occur at some time $T^* \in (0, 10 \epsilon )$.   We consider a completely arbitrary
geometry for a splash singularity in Section \ref{section9}, by following our definition of a generalized splash domain from our previous work in 
\cite{CoSh2014a}.  This, then, allows us to show in Section \ref{section10}, that we can construct a splash singularity for a geometry which is
arbitrarily close in $H^{3}$ to {\it any} prescribed $H^{3}$ splash domain.
}

\section{Notation, local coordinates, and some preliminary results} \label{sec:notation} 

\subsection{Notation for the gradient vector} \label{sec:grad-horiz-deriv}

Throughout the paper the symbol $\nabla $ will be used to denote the $d$-dimensional gradient vector 
$
\nabla =\left( \frac{\p}{\p x_1}\,,  \frac{\p}{\p x_2}\,, ...,\,  \frac{\p}{\p x_d}  \right)
$.
\subsection{Notation for partial differentiation and  the Einstein summation convention} \label{sec:notat-part-diff}

The $k$th partial derivative of $F$ will be denoted by $F\cp{k}=\frac{
\partial F}{
\partial x_k}$. Repeated Latin indices $i,j,k$, etc., are summed from $1$ to $d$, and repeated Greek indices $\alpha, \beta, \gamma$, etc., are summed from $1$ to $d$$-$$1$. For example, $F\cp{ii}=\sum_{i=1}^d\frac{\p^2F}{\p x_i\p x_i}$, and $F^i\cp{\alpha} I^{\alpha\beta} G^i\cp{\beta}=\sum_{i=1}^d\sum_{\alpha=1}^{d-1}\sum_{\beta=1}^2\frac{\p F^i}{\p x_\alpha} I^{\alpha\beta} \frac{\p G^i}{\p x_\beta}$.

\def\R{ \mathbb{R}  }

\subsection{Tangential (or horizontal) derivatives}\label{sec: tangential derivative} On each boundary
chart  $\UL\cap\Omega$, for $1\le\local\le K$, we let $\bar \p$ denote the \textit{tangential derivative} whose
 $\alpha$th-component given by
\begin{align*}
	\bar \p_ \alpha  f=\Pset{\frac{\p}{\p x_\alpha}\bset{f\circ\thetal}}\circ\thetal^{-1}=\Pset{\pset{ \nabla  f\circ\thetal}\frac{\p\thetal}{\p x_\alpha}}\circ\thetal^{-1} \,.
\end{align*}
For functions defined directly on  $B^+$, $\hd$ is simply the horizontal derivative $\hd = (\partial_{x_1},...,  \partial_{x_{d-1}})$.

\subsection{Sobolev spaces} \label{sec:diff-norms-open}

For integers $k\ge0$ and a bounded domain $U$ of $\R^3$, we define the Sobolev space $H^k(U)$ $\pset{H^k(U;\R^3)}$ to be the completion of $C^\infty(\bar{U})$ $\pset{C^\infty(\bar{U}; \mathbb{R}  ^3)}$ in the norm 
\begin{align*}
	\norm{u}_{k,U}^2=\sum_{\abs{a}\le k}\int_U \Abs{ \nabla ^a u(x) }^2 , 
\end{align*}
for a multi-index $a\in \mathbb{Z}  ^3_+$, with the convention that $\abs{a}=a_1+a_2+a_3$.  When there is no possibility for confusion,
we write $\| \cdot \|_k$ for $\norm{\cdot }_{k,U}$.
For real numbers $s\ge0$, the Sobolev spaces $H^s(U)$ and the norms $\label{n:interior norm}\norm{\cdot}_{s,U}$ are defined by interpolation. 
We will write $H^s(U)$ instead of $H^s(U;\R^d)$ for vector-valued functions.

\subsection{Sobolev spaces on a surface $\Gamma$} \label{sec:sobolev-spaces-gamma} For functions $u\in H^k(\Gamma)$, $k\ge0$, we set 
\begin{align*}
	\norm{u}_{k,\Gamma}^2=\sum_{\abs{a}\le k } \int_\Gamma \Abs{ \hd^a u(x)}^2, 
\end{align*}
for a multi-index $a\in \mathbb{Z}  ^2_+$. For real $s\ge0$, the Hilbert space $H^s(\Gamma)$ and the boundary norm $\label{n:boundary-norm}\abs{\cdot}_s$ is defined by interpolation. The negative-order Sobolev spaces $H^{-s}(\Gamma)$ are defined via duality. That is, for real $s\ge0$, 
$H^{-s}(\Gamma)=H^s(\Gamma)' $.

\subsection{The unit normal and tangent vectors}
We let $n( \cdot ,t)$ denote the outward unit normal vector to the moving boundary $ \Gamma (t)$.   When $t=0$, we let $N_ \epsilon $ denote the
outward unit normal to $\Gamma^ \epsilon $.
For each $ \alpha =1,...,d-1$ and $x \in \Gamma^ \epsilon$ ,  $\tau_ \alpha(x)$ denotes  an orthonormal basis of the ($d$$-$$1$)-dimensional tangent space to
$ \Gamma^\epsilon$ at the point $x$.

\section{The sequence of  initial  domains $\Omega^ \epsilon $}\label{sec::dino_wave}

We shall use, as initial data,  a sequence of domains, whose two-dimensional cross-section resembles a dinosaur neck arching over its body.

\subsection{The ``dinosaur wave'' domains}

\begin{definition}[The domain  $\Omega$]\label{def-dino}
Let $\Omega\subset \mathbb{R}  ^d$, $d=2,3$, be a smooth bounded domain (as shown on  the left of Figure \ref{fig_dino}) with boundary $\Gamma$.
We assume that there are three particular open subsets of $\Omega$ as follows:
\begin{enumerate} 
\item  There exists an open subset $\omega\subset \Omega$ such that its boundary
$\partial\omega\subset\Gamma$ is a vertical   circular cylinder of radius $r$ and  of length $h>0$.

\item There exists an open subset $\omega_+\subset\Omega$ which is the lower-half of an open ball of radius $1$, 
located directly below the cylindrical region $\omega$, and in
contact with the cylindrical region $\overline{\omega}$.     The ``south pole'' of $\omega_+$ is the point $X_+$ (see Figure \ref{fig_initialconditions}).

\item There exists an open subset $\omega_-\subset\Omega$ directly below, at a distance $1$,  from the
 ``south pole'' $X_+$ of $\omega_+$, 
such that the points with  maximal vertical coordinate in $\partial\omega_-\cap \Gamma$ form a subset of the horizontal plane $x_d=0$.

\item Coordinates are assigned to subsets of $\Omega$ as follows:
\begin{enumerate} 
\item  The origin of $ \mathbb{R}  ^d$ is contained in $\partial\omega_-\subset\Gamma \cap \{ x_d=0\}$.
\item The point $X_+$, the  ``south pole'' of $\omega_+$, has the coordinates
$X^ \alpha _+  =0$ for $ \alpha =1,..., d-1$ and $X_+^d =1$.
\item The top boundary of the hemisphere $\omega_+$ is the set $\{ (x_h, x_d) \in \mathbb{R} ^d \ : \ x_d = {\frac{3}{2}} , \  | x_h | < 1 \}$.
\item The cylindrical region $ \omega $ is given by $\{ (x_h, x_d) \in \mathbb{R} ^d \ : \  {\frac{3}{2}} < x_d <{\frac{3}{2}} + h , \  | x_h | < 1 \}$.
\end{enumerate}

\end{enumerate} 

\end{definition}

\begin{figure}[h]
 \begin{tikzpicture}[scale=.5]      
         \draw[color=red,ultra thick] (6,0.5) arc (-90:0:1cm);
         \draw[color=red,ultra thick] (6,0.5) arc (270:180:1cm);
           \draw[color=green,ultra thick] plot[smooth,tension=.6] coordinates{   (5,1.5) (7,1.5)  }; 
            \draw[color=green,ultra thick] plot[smooth,tension=.6] coordinates{   (5,3) (7,3)  }; 
             \draw (6,2) node { $ \omega  $}; 
              \draw (6,1) node { $ \omega_+  $}; 
               \draw (6,-2) node { $ \omega_-  $}; 
         
         \draw[color=blue,ultra thick] (5,3) arc (00:180:2cm);
         \draw[color=blue,ultra thick] (7,3) arc (00:180:4cm);

        \draw[color=blue,ultra thick] plot[smooth,tension=.6] coordinates{   (1,0) (1,2) (1,3) }; 
        \draw[color=blue,ultra thick] plot[smooth,tension=.6] coordinates{   (-1,-3) (-1,2) (-1,3) };

        \draw[color=blue,ultra thick] plot[smooth,tension=.6] coordinates{   (5,1.5) (5,2) (5,3) };
        \draw[color=blue,ultra thick] plot[smooth,tension=.6] coordinates{  (7,3) (7,2) (7,1.5) };
        
        \draw[color=blue,ultra thick] (1,0) arc (180:270:1cm);
        \draw[color=blue,ultra thick] (-1,-3) arc (180:270:1cm);
         
        \draw[color=blue,ultra thick] plot[smooth,tension=.6] coordinates{   (2,-1) (10,-1)  }; 
        \draw[color=blue,ultra thick] plot[smooth,tension=.6] coordinates{   (0,-4) (10,-4)  }; 
        
         \draw[color=blue,ultra thick] (10,-1) arc (90:0:1cm);
         \draw[color=blue,ultra thick] (10,-4) arc (-90:0:1cm);
         
          \draw[color=blue,ultra thick] plot[smooth,tension=.6] coordinates{   (11,-2) (11,-3)  };
          
          \draw (10,-2.5) node { $\Omega $}; 
          \draw (7,5) node { $\Gamma $};

             \draw[color=red,ultra thick] (20,-0.5) arc (-90:0:1cm);
         \draw[color=red,ultra thick] (20,-0.5) arc (270:180:1cm);
          \draw[color=green,ultra thick] plot[smooth,tension=.6] coordinates{   (19,0.5) (21,0.5)  }; 
            \draw[color=green,ultra thick] plot[smooth,tension=.6] coordinates{   (19,3) (21,3)  }; 
             \draw (20,1.5) node { $ \omega^ \epsilon   $}; 
              \draw (20,0.) node { $ \omega_+^ \epsilon   $}; 
              \draw (20,-2.) node { $ \omega_-   $}; 
         
         \draw[color=blue,ultra thick] (19,3) arc (00:180:2cm);
         \draw[color=blue,ultra thick] (21,3) arc (00:180:4cm);

        \draw[color=blue,ultra thick] plot[smooth,tension=.6] coordinates{   (15,0) (15,2) (15,3) }; 
        \draw[color=blue,ultra thick] plot[smooth,tension=.6] coordinates{   (13,-3) (13,2) (13,3) };

        \draw[color=blue,ultra thick] plot[smooth,tension=.6] coordinates{   (19,0.5) (19,2) (19,3) };
        \draw[color=blue,ultra thick] plot[smooth,tension=.6] coordinates{  (21,3) (21,2) (21,0.5) };
        
        \draw[color=blue,ultra thick] (15,0) arc (180:270:1cm);
        \draw[color=blue,ultra thick] (13,-3) arc (180:270:1cm);
         
        \draw[color=blue,ultra thick] plot[smooth,tension=.6] coordinates{   (16,-1) (24,-1)  }; 
        \draw[color=blue,ultra thick] plot[smooth,tension=.6] coordinates{   (14,-4) (24,-4)  }; 
        
         \draw[color=blue,ultra thick] (24,-1) arc (90:0:1cm);
         \draw[color=blue,ultra thick] (24,-4) arc (-90:0:1cm);
         
          \draw[color=blue,ultra thick] plot[smooth,tension=.6] coordinates{   (25,-2) (25,-3)  };
        
         \draw (24,-2.5) node { $\Omega^ \epsilon $}; 
           \draw (21,5) node { $\Gamma^ \epsilon $}; 
           
           \draw(2.5,0.5) -- (4.5,0.5);
           \draw[->] (3.5,2.5) -- (3.5,0.6);
            \draw[->] (3.5,-3.) -- (3.5,-1.1);
           \draw (3.5,-.25) node { $1$};

           \draw(16.5,-0.5) -- (18.5,-0.5);
           \draw[->] (17.5,1.5) -- (17.5,-0.4);
            \draw[->] (17.5,-3.) -- (17.5,-1.1);
           \draw (17.5,-.75) node { $_{ \epsilon }$};

   \draw[color=green,ultra thick] plot[smooth,tension=.6] coordinates{   (2,-1.05) (2,-3.95)  }; 
   \draw[color=green,ultra thick] plot[smooth,tension=.6] coordinates{   (8,-1.05) (8,-3.95)  }; 
   
    \draw[color=green,ultra thick] plot[smooth,tension=.6] coordinates{   (16,-1.05) (16,-3.95)  }; 
   \draw[color=green,ultra thick] plot[smooth,tension=.6] coordinates{   (22,-1.05) (22,-3.95)  };

  \end{tikzpicture} 
%    \caption{{\footnotesize  The ``dinosaur wave'' domains $\Omega^ \delta $ with boundary $\Gamma^ \delta $ is used as initial data for the Navier-Stokes splash singularity.  In order to ensure that a splash occurs, the
%    ``dinosaur neck'' stretches downward so that the distance $ \delta $ between the two portions of the interface becomes sufficiently small.
%On the left, the distance is given as $ \delta _1$ while on the right, the distance is given as $ \delta _2$, and $ \delta _2 < \delta _1$.}}\label{fig_dino}
      \caption{{\footnotesize  {\bf Left.} The ``dinosaur wave'' domain $\Omega $ with boundary $\Gamma $.  {\bf Right.} The sequence of ``dinosaur waves''
      $\Omega^ \epsilon $ with boundary $\Gamma^ \epsilon $, $ \epsilon >0$, 
       used as initial data for the Navier-Stokes splash singularity.  In order to ensure that a splash occurs, the
    ``dinosaur neck'' $ \omega ^ \epsilon $ stretches downward so that there is a  distance $ \epsilon  $ between the two portions.   The domains $\Omega^\epsilon $ simply stretch the neck
    of the dinosaur, and are identical to $\Omega$ away from the neck.
     }}\label{fig_dino}
\end{figure}
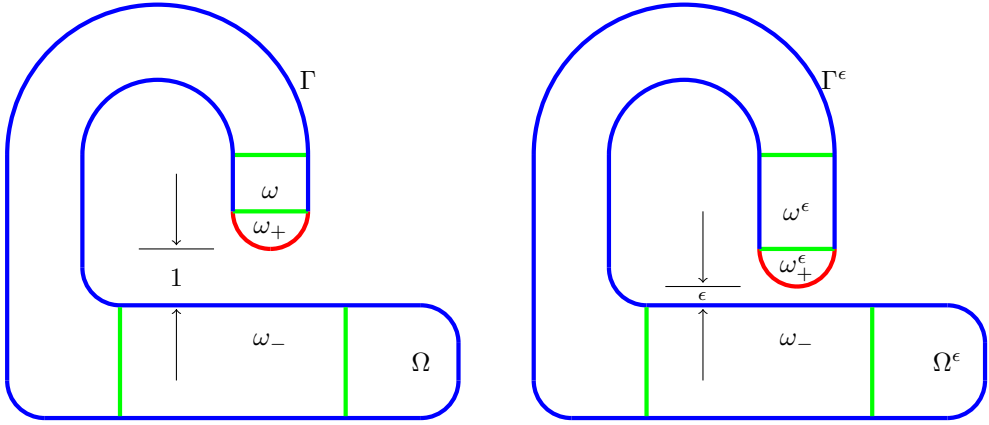

\begin{definition}[The initial domains $\Omega^ \epsilon $]\label{def-dino-e} For $ 0 < \epsilon \ll 1$, 
let $\Omega\subset \mathbb{R}  ^d$, $d=2,3$, be a smooth bounded domain (as shown on  the right of Figure \ref{fig_dino}) with boundary 
$\Gamma^ \epsilon $.
We define the domain $ \Omega^\epsilon $ to be the following modification of the domain $\Omega$:
\begin{enumerate} 
\item  There exists an open subset $\omega^ \epsilon \subset \Omega^ \epsilon $, which is a vertical dilation of the domain $\omega$,
such that its boundary
$\partial\omega^ \epsilon \cap \Gamma^ \epsilon $ is a vertical   circular cylinder of radius $r$ and  of length $h+1- \epsilon $.

\item There exists an open subset $\omega_+^ \epsilon \subset\Omega^ \epsilon $ which is the set $\omega^+$ translated vertically
downward a distance $1- \epsilon $; hence, $\omega_+^ \epsilon $  is the lower-half of an open ball of radius $1$, 
located directly below the cylindrical region $\omega^ \epsilon $, and in
contact with the cylindrical region $\overline{\omega^ \epsilon }$.     The ``south pole'' of $\omega_+^ \epsilon $ is the point $X_+^ \epsilon $.

\item There exists an open subset $\omega_-\subset\Omega^ \epsilon $ directly below, and a distance $ \epsilon $,  from the
 ``south pole'' $X_+^ \epsilon $ of $\omega_+^ \epsilon $, 
such that the points with  maximal vertical coordinate in $\partial\omega_-\cap \Gamma$ form a subset of the horizontal plane $x_d=0$.
We assume that $\partial\omega_-\cap \Gamma$ contains a $d$$-$$1$-dimensional ball of radius $\sqrt{ \epsilon }$.

\item Coordinates are assigned to subsets of $\Omega^ \epsilon $ as follows:
\textcolor{black}{ 
\begin{enumerate} 
\item  The origin of $ \mathbb{R}  ^d$ is contained in $\partial\omega_-\subset\Gamma \cap \{ x_d=0\}$.
\item The point $X_+^ \epsilon $, the  ``south pole'' of $\omega_+^ \epsilon $, has the coordinates
$X^ \alpha _+  =0$ for $ \alpha =1,..., d-1$ and $X_+^d = \epsilon $.
\item The top boundary of the hemisphere $\omega_+^ \epsilon $ is the set $\{ (x_h, x_d) \in \mathbb{R} ^d \ : \ x_d = \epsilon + {\frac{1}{2}} , \  | x_h | < 1 \}$.
\item The cylindrical region $ \omega^ \epsilon  $ is given by $\{ (x_h, x_d) \in \mathbb{R} ^d \ : \  \epsilon + {\frac{1}{2}}  < x_d < \epsilon + {\frac{1}{2}}  + h , \  | x_h | < 1 \}$.
\end{enumerate} 
}
\end{enumerate} 

\end{definition}

\subsection{Local coordinate charts for $\Omega$ and $\Omega^ \epsilon $ } \label{sec::charts}    
\subsubsection{Local charts for $\Omega$}
We let $s \ge 3$ and $0 < \epsilon \ll 1$.
Let $\Omega\subset \mathbb{R}^d  $ denote a smooth  open set, and let $\{U_l\}_{l=1}^K$ denote an open covering of $\Gamma=\p\Omega$, such that for each $l\in \{1,2,\dots,K\}$, with 
\begin{align*}
	B&=B(0,1),\text{ denoting the open ball of radius $1$ centered at the origin and}, \\
	B^+&=B\cap\set{x_d>0}, \\
	B^0&=\overline B\cap\set{x_d=0}, 
\end{align*}
there exist $C^\infty $ charts $\thetal$ which satisfy 
\begin{subequations}
\label{normalchart}
\begin{align}
	\thetal\colon B\to\UL\ &\text{ is an $C^ \infty $ diffeomorphism},  \\
	\thetal(B^+)&=\UL\cap\Omega, \ \ \
	\thetal(B^0)=\UL\cap\Gamma\,,
\end{align}
\end{subequations}
and $\det \nabla \thetal=C_l$ for a constant $C_l >0$. We assume these boundary charts can be split into three categories (each being non empty):

\begin{itemize} 
\item  For $1\le l\le K_1$, $\theta_l (B^+) \subset \omega $.
\item  For $K_1+1\le l\le K_2$, $\theta_l (B^+) \not \subset \omega $ and $\theta_l (B^+)\cap \omega_+ =\emptyset$.
\item For $K_2+1\le l\le K$, $\theta_l (B^+) \not \subset \omega $ and $\theta_l (B^+)\cap \omega_+ \neq \emptyset$.
\end{itemize} 

We also assume that the images of any charts $\theta_l$ for $K_1+1\le l\le K_2$ does not intersect any of the images of the charts for $K_2+1\le l\le K$.
%
%
%\begin{figure}
%	[here] \centering 
%	\includegraphics[scale = 0.8]{fig_cover.eps}
%	\caption{Indexing convention for the open cover $\set{U_\local}_{\local=1}^L$ of $\Omega$.} 
%\end{figure}

Next, for $L>K$, we let $\set{\UL}_{\local=K+1}^L$ denote a family of open sets contained in $\Omega$ such that $\set{\UL}_{\local=1}^L$ is an open cover of $\Omega$ and there exist smooth diffeomorphisms $\theta_l:B \to U_l$ with  $\det \nabla \theta_l$ equal to a constant $C_l>0$.

Just as for the case of the boundary charts, we assume that these interior charts are split into three categories (each being non empty):
\begin{itemize} 
\item  For $K+1\le l\le L_1$, $\theta_l (B) \subset \omega $.
\item For $L_1+1\le l\le L_2$, $\theta_l (B) \not \subset \omega $ and $\theta_l (B^+)\cap \omega_+ =\emptyset$.

\item  For $L_2+1\le l\le L$,  $\theta_l (B) \not \subset \omega $ and $\theta_l (B^+)\cap \omega_+ \neq \emptyset$.
\end{itemize} 
We assume that the union of the images of the charts $\theta_l$, for $1\le l\le K_1$ and $K+1\le l\le L_1$ contains the shortened cylindrical region
$\stackrel{ \circ }{\omega} =    \{ (x_h, x_d) \in \mathbb{R} ^d \ : \  {\frac{3}{2}} +  \epsilon  < x_d <{\frac{3}{2}} +   h - \epsilon  , \  | x_h | < 1 \}$.

%%with same center and radius as $\omega$, but with height $\frac{1-3\epsilon}{1-\epsilon}\ h$.
%\textcolor{red}{ 
%We also assume that the union of the images of the charts $\theta_l$, for $K_2+1\le l\le K$ and $L_2+1\le l\le L$ contains the  
%shortened cylindrical region $\tilde \omega =    \{ (x_h, x_d) \in \mathbb{R} ^d \ : \   {\frac{3}{2}} + 2 \epsilon  < x_d <{\frac{3}{2}} + h -  2\epsilon  , \  | x_h | < 1 \}$.
%}
%%parts of $\omega$ at a distance less than or equal to a distance of $2\epsilon$ from the vertically maximum and minimum points of $\omega$.
%\textcolor{red}{ 
%Finally, we assume the images of any of the charts $\theta_l$ for $L_1+1\le l\le L_2$ does not intersect any of the images of the charts $\theta_l$ for $L_2+1\le l\le L$.
%}

\textcolor{black}{
We assume that the union of the images of the charts $\theta_l$, for $1\le l\le K_1$ and $K+1\le l\le L_1$ contains the shortened cylindrical region
$$\stackrel{ \circ }{\omega} =    \{ (x_h, x_d) \in \mathbb{R} ^d \ : \  {\frac{3}{2}} +\frac{h}{3+h} \frac{h}{2}   < x_d <{\frac{3}{2}} +   h - \frac{h}{3+h}\frac{h}{2}  , \  | x_h | < 1 \}$$
 of length $\frac{3}{3+h} h$}

%with same center and radius as $\omega$, but with height $\frac{1-3\epsilon}{1-\epsilon}\ h$.
\textcolor{black}{ 
We also assume that the union of the images of the charts $\theta_l$, for $K_2+1\le l\le K$ and $L_2+1\le l\le L$ contains the complement in 
$\omega$ of the 
shortened cylindrical region 
$$\tilde \omega =    \{ (x_h, x_d) \in \mathbb{R} ^d \ : \   {\frac{3}{2}} + \frac{h}{2}\frac{\frac{3}{2}+h} {3+h}< x_d <{\frac{3}{2}} + h - \frac{h}{2}\frac{\frac{3}{2}+h} {3+h}  , \  | x_h | < 1 \}$$ 
of length $\frac{3}{3+h}\frac{h}{2}$, so that the complement is of length $\frac{\frac{3}{2}+h}{3+h} h$.
}
%parts of $\omega$ at a distance less than or equal to a distance of $2\epsilon$ from the vertically maximum and minimum points of $\omega$.

\textcolor{black}{ 
Finally, we assume the images of any of the charts $\theta_l$ for $L_1+1\le l\le L_2$ does not intersect any of the images of the charts $\theta_l$ for $L_2+1\le l\le L$.
}

\subsubsection{Local charts for $\Omega^ \epsilon $}
We next explain how this system of charts can be simply modified to describe $\Omega^\epsilon$ using the following three steps:

\begin{enumerate}
\item For either $1\le l\le K_1$ or $K+1\le l\le L_1$, we define the vertically dilated chart 
(corresponding to a cylinder with length dilated from $h$ to $h+1-\epsilon$) 
$$\theta_l^\epsilon=\left(\theta_1,\theta_2, \frac{h+1-\epsilon}{h} (\theta_3-{\frac{3}{2}} )+ {\frac{1}{2}} +\epsilon\right)\,.$$ 

Note that $\theta_l^\epsilon$ sends any point whose image by $\theta_l$ was  at the altitude $ {\frac{3}{2}} $ in $\overline{\omega}$ (respectively 
${\frac{3}{2}} +h$) into a point of altitude ${\frac{1}{2}} +\epsilon$ (respectively ${\frac{3}{2}} +h$) in $\Omega^\epsilon$.
\item  For either $K_1+1\le l\le K_2$ or $L_1+1\le l\le L_2$ , we set 
$\theta_l^\epsilon=\theta_l$.
\item  For either $K_2+1\le l\le K$ or $L_2+1\le l\le L$, we set the translated in the vertical direction chart $\theta_l^\epsilon=\theta_l-(1-\epsilon) e_d$.
\end{enumerate}
These charts describe $\Omega^\epsilon$, and again $\det \nabla \theta_l$ is a strictly positive constant given by either $C_l$ or $\frac{h+1-\epsilon}{h} C_l$.

\subsubsection{Cut-off functions on charts covering $\Omega$}

\textcolor{black}{ 
Let $\{\xi_l\}_{l=1}^L$ denote a smooth partition of unity, subordinate to the covering $\{U_l\}_{l=1}^L$; 
i.e., $ \xi _l \in C^ \infty _c(U_l)$,  $0 \le \xi _l \le 1$, and  $\sum_{l=1}^L \xi_l =1$.}

\textcolor{black}{ 
 We set   $ \mathcal{B}_l = B^+ $ for $l=1,...,K$, and $ \mathcal{B}_l = B$ for $l=K+1,...,L$.   
For each $l=1,...,L$, we set $\zeta_l = \xi _l \circ \theta_l$, so that $ \zeta_l \in C^ \infty _c ( \mathcal{B} _l)$ whenever the charts $\theta_l$ are smooth.}

\subsubsection{Cut-off functions on charts covering $\Omega^ \epsilon $}\label{sec::partition}   
\textcolor{black}{ We define the cut-off functions $\xi _l^ \epsilon $  as follows: 
$$
\xi ^ \epsilon _l \circ \theta_l^ \epsilon = \xi _l \circ \theta_l \,.
$$
Setting $ \zeta _l = \xi ^ \epsilon _l \circ \theta_l^ \epsilon$, we see that $\| \zeta_l \|_{k, \mathcal{B} _l} $ is bounded by a constant which is independent 
of $ \epsilon $.
}

\section{The Lagrangian description of the Navier-Stokes free-boundary problem}  \label{sec::lagrangian}
For $ \epsilon >0$, 
 we let $\Omega^ \epsilon $ with boundary $\Gamma^ \epsilon $ be given by Definition \ref{def-dino-e},
and we transform the system (\ref{NSe}) into a system of equations set on this reference domain.   To do so, we shall employ the
Lagrangian coordinates.

The Lagrangian flow map $\eta ( \cdot ,t)$ is the solution of the
$\eta_t (x,t) = u(\eta(x,t),t)$ for $t>0$ with initial condition $\eta(x,0) =0$.  Since $ \operatorname{div} u=0$, it follows that $\det \nabla \eta =1$.
For each instant of time $t$ for which the flow is well-defined, we have 
$$
\eta( \cdot ,t): \Omega^ \epsilon   \to \Omega (t) \text{ is a diffeomorphism}\,;
$$
furthermore, thanks to (\ref{NSe}d), 
$$
\Gamma(t) = \eta( \Gamma^ \epsilon , t) \,.
$$
Notationally, we keep the dependence on $ \epsilon >0$ implicit, except for the initial domain and boundary.

Next, we define
\begin{align*}
v &= u \circ \eta   \text{ (Lagrangian velocity)},  \\
q&=p \circ \eta   \text{ (Lagrangian pressure)}, \\
A &= [ \nabla  \eta]^{-1}  \text{ (inverse of the deformation tensor)}\,, \\
g_{ \alpha \beta } &=  \eta, _ \alpha \cdot \eta,_\beta \ \ \alpha ,\beta =1,.., d-1 \text{ (induced metric on $\Gamma$)}\,, \\
\mathfrak{g}  & = \det( g_{ \alpha \beta }) \,.
\end{align*}
We also define the Lagrangian analogue of some of the fundamental differential operators present in this equation:
\begin{align*}
\operatorname{div} _\eta v &=  (\operatorname{div} u) \circ \eta = v^i,_j A^j_i  \,, \\
\operatorname{curl} _\eta v &=  (\operatorname{curl} u) \circ \eta \text{ or }  [ \operatorname{curl} _ \eta v]_i = \varepsilon_{ i jk}  v^k,_r A^r_j \,,  \\
\operatorname{Def} _\eta v &=  (\operatorname{Def} u) \circ \eta  \text{ or } [\operatorname{Def} _ \eta v]^i_j =  v^i,_rA^r_j + v^j,_rA^r_i \,, \\
\Delta _\eta v &=  (\Delta u) \circ \eta = ( A^j_r A^k_r v,_k),_j \,.
\end{align*}

The Lagrangian version of equations (\ref{NSe})  is given on
the fixed reference domain $\Omega^ \epsilon $ by
\begin{subequations}
\label{NSlag}
\begin{alignat}{2}
\eta( \cdot ,t) & = e + \int_0^t v(\cdot ,s) ds  \ && \text{ in } \Omega^ \epsilon  \times [0,T] \,,  \\
 v_t  + A^T \nabla q   &= \nu \Delta _ \eta v \ \ && \text{ in } \Omega^ \epsilon  \times (0,T] \,,  \\
\operatorname{div} _\eta v &=0 \ \ && \text{ in } \Omega^\epsilon  \times [0,T] \,,  \\
\nu \operatorname{Def} _ \eta v \cdot n - qn &=0\ \ && \text{ on } \Gamma^ \epsilon  \times [0,T] \,,  \\
(\eta,v)  &=(e,u_0) \ \  \ \ && \text{ in } \Omega^\epsilon  \times \{t=0\} \,, 
\end{alignat}
\end{subequations}
where $e(x)=x$ denotes the identity map on $\Omega$, and where we write $n$ for $n(\eta)$ in the Lagrangian description; in particular, the
unit normal vector $n$ at the point $\eta( x, t)$ can be expressed in terms of the cofactor matrix $A$ and the time $t=0$ normal vector $N_ \epsilon $ as
$$
n = A^T N_ \epsilon / |  A^T N_ \epsilon| \,.
$$
 Due to (\ref{NSlag}c),
$$
\Delta _ \eta v = \operatorname{div} _ \eta \operatorname{Def} _\eta v \,,
$$
so that  (\ref{NSlag}d) can be viewed as the natural boundary condition.    The variables $ \eta, v$, and $q$ have an a priori dependence on $ \epsilon 
> 0$, but we do not explicitly write this.

Local-in-time  existence and uniqueness of solutions to (\ref{NSlag}) have been known since
the pioneering work of  Solonnikov \cite{Sol1977}.   We shall establish a priori estimates for (\ref{NSlag}) with the initial domain $\Omega^ \epsilon $ and
with  divergence-free initial velocity fields 
 satisfying the single compatibility condition 
\begin{equation}\label{comp}
[ \operatorname{Def} u^ \epsilon _0 \cdot N^ \epsilon ]\cdot \tau^ \epsilon _ \alpha  =0 \text{  on  } \Gamma^ \epsilon  \,, 
\end{equation} 
where $N^ \epsilon $ denotes the outward unit
normal to $\Gamma^ \epsilon $ and $\tau^ \epsilon _ \alpha $, $ \alpha =1,.., d-1$, denotes the $d$$-$$1$ tangent vectors to $\Gamma^ \epsilon $.

We will show that both the a priori estimates and
the time of existence for solutions are independent of the distance $ \epsilon  >0$ between the falling dinosaur head $X_+^ \epsilon $ and the flat trough
$ \partial \omega_- \cap \{ x_d=0\}$ (see Figure
\ref{fig_dino}).   To do so, we shall rely on some basic lemmas that provide us constants which are independent of $ \epsilon$.

\section{Elliptic and  Sobolev constants are independent of $\epsilon $}\label{sec5}
We consider the following linear Stokes problem
\begin{subequations}
\label{Stokes}
\begin{alignat}{2}
- \Delta u + \nabla p& = f  \ && \text{ in } \Omega^ \epsilon\,,  \\
\operatorname{div} u &= \phi  \ \ && \text{ in } \Omega^ \epsilon  \,,  \\
u&=g\ \ && \text{ on } \Gamma^ \epsilon \,, 
\end{alignat}
\end{subequations}

\begin{lemma}[Estimates for the Stokes problem on $\Omega^ \epsilon $] \label{lemma1} 
Suppose that  for  integers $k \ge 2$, $f \in H^{k-2}(\Omega^ \epsilon )$, $\phi \in H^{k-1}(\Omega^ \epsilon )$, and $g \in H^{k-1/2}(\Gamma^\epsilon )$,
and $\int_{ \Omega ^ \epsilon } \phi(x) dx = \int_{\Gamma^\epsilon } g \cdot N\, dS$.  
Then, there exists a unique solution $u \in H^k (\Omega^ \epsilon )$ 
and $p \in H^{k-1} (\Omega^ \epsilon )/ \mathbb{R}  $ to the Stokes problem (\ref{Stokes}).  Moreover, there is a constant $C$ depending only on 
$\Omega$, but  independent of $ \epsilon >0$, such that
\begin{equation}\label{stokes-reg}
\|u \|_k + \|p\|_{k-1} \le C \left( \|f\|_{k-2} + \|\phi\|_{k-1} + | g|_{k-1/2} \right) \,.
\end{equation}
\end{lemma}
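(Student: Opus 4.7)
The plan is to obtain the estimate by the standard partition-of-unity reduction to flat-boundary Stokes problems on $B$ and $B^+$, and then to observe that every piece of the reduction carries constants that do not see $\epsilon$, because of the very particular way in which the charts $\theta_l^\epsilon$ were constructed in Section~\ref{sec::charts}.

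First I would reduce the problem to the case of zero divergence and zero boundary data. Using the Bogovskii operator on $\Omega^\epsilon$ one produces $w\in H^k(\Omega^\epsilon)$ with $\operatorname{div} w = \phi$, $w|_{\Gamma^\epsilon}=g$, and $\|w\|_k \le C(\|\phi\|_{k-1}+|g|_{k-1/2})$; the compatibility hypothesis $\int \phi = \int g\cdot N$ is exactly what makes this solvable. The new right-hand side $\tilde f = f + \Delta w$ lies in $H^{k-2}$ with the correct bound, and we are left with a Stokes problem with homogeneous data. The only $\epsilon$-dependence in this step sits inside the norm of the Bogovskii extension, but since $\Omega^\epsilon$ is obtained from $\Omega$ by a vertical dilation of a fixed cylindrical piece with dilation factor $(h+1-\epsilon)/h \in [1,(h+1)/h]$ and a rigid translation of the lower cap, one can construct the extension chart by chart and transfer it back through the $\theta_l^\epsilon$; this produces a constant depending only on $\Omega$ and on the uniform bounds on $\theta_l^\epsilon$, $(\theta_l^\epsilon)^{-1}$ and $\zeta_l$.

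Next I would localize. Multiplying the homogeneous Stokes system by $\zeta_l = \xi_l^\epsilon \circ \theta_l^\epsilon$ and pulling back by $\theta_l^\epsilon$ gives, on either $B$ or $B^+$, a Stokes-type system
\begin{equation*}
-(a^{ij} U^k,_j),_i + b^{ij}P,_j = F_l, \qquad b^{ij}U^i,_j = \Phi_l, \qquad U|_{B^0}=0,
\end{equation*}
where $a^{ij}, b^{ij}$ are constant coefficients determined by the constant Jacobian $\nabla\theta_l^\epsilon$, and where $F_l, \Phi_l$ involve commutators of $\zeta_l$ with the operators plus the already-controlled lower-order terms. The coefficients $a^{ij}, b^{ij}$ are bounded and uniformly elliptic, with bounds depending only on the dilation factor $(h+1-\epsilon)/h$, which is bounded uniformly in $\epsilon\in(0,1)$. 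The classical Stokes regularity theorem on $B$ and $B^+$ (Cattabriga, Solonnikov) then gives
\begin{equation*}
\|\zeta_l u \circ \theta_l^\epsilon\|_{k,\mathcal B_l} + \|\zeta_l p\circ\theta_l^\epsilon\|_{k-1,\mathcal B_l} \le C\bigl( \|\tilde f\|_{k-2} + \text{lower-order}\bigr),
\end{equation*}
with $C$ depending only on the uniform coefficient bounds.

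Finally I would sum over $l=1,\dots,L$. Because $\sum_l \xi_l^\epsilon = 1$ and the number of charts $L$ and the norms $\|\zeta_l\|_{k,\mathcal B_l}$ are independent of $\epsilon$ by the construction in Section~\ref{sec::partition}, and because $\det\nabla\theta_l^\epsilon$ is a positive constant bounded above and below independently of $\epsilon$, pulling back to $\Omega^\epsilon$ preserves norms up to an $\epsilon$-independent factor. The lower-order commutator terms are absorbed in the standard way by interpolation and induction on $k$, starting from $k=2$. Uniqueness modulo constants for the pressure follows from the usual energy identity, which involves no constant dependent on $\epsilon$. The main obstacle is to track that no step silently introduces a constant that blows up as $\epsilon\to 0$; this is handled by the observation that the only change in the geometry is a bounded dilation and a rigid translation, both of which are $\epsilon$-independent in every relevant norm.
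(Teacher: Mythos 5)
Your argument is correct and is essentially the paper's own proof, just written out in full: the paper also invokes the known Stokes estimate on $\Omega$ (Amrouche--Girault) and derives the $\epsilon$-uniform bound on $\Omega^\epsilon$ by localizing through the charts $\theta_l^\epsilon$, observing that these differ from $\theta_l$ only by a vertical dilation with $\epsilon$-independent bounds (and a rigid translation), so that each chart-local constant is $\epsilon$-independent. Your additional details -- the Bogovskii reduction to homogeneous data and the explicit pullback to constant-coefficient Stokes systems on $B$, $B^+$ -- are sound elaborations of what the paper leaves implicit.
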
 
\begin{proof} 
The estimate (\ref{stokes-reg}) is well-known on the domain $\Omega$; see, for example, \cite{AmGi1991}.  This estimate on the sequence of domains
$\Omega^ \epsilon $ follows by localization using the charts $\theta^ \epsilon _l$ given in Section \ref{sec::charts}.  Since the charts $\theta^ \epsilon _l$, are modified from the charts $\theta_l$ by a vertical dilation with lower and upper bound that is uniform in $ \epsilon $, the constant for the elliptic 
estimate in each chart is independent of $ \epsilon>0$.
\end{proof} 

\begin{lemma}[Sobolev constant on $\Omega^ \epsilon $] \label{lemma2} Independent of $ \epsilon$, there exists a constant $C>0$ which depends only
on the domain $\Omega$, such that
$$
\max_{x\in\Omega^ \epsilon } | u(x)| \le C \|u\|_{s, \Omega^ \epsilon } \ \ \forall  u \in H^s(\Omega^ \epsilon ) \,, \ \ s> d/2 \,.
$$
\end{lemma}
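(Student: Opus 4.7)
The plan is to reduce the inequality on $\Omega^\epsilon$ to the standard Sobolev embedding on the unit ball $B$ (or half-ball $B^+$) via the charts $\theta_l^\epsilon$ constructed in Section~\ref{sec::charts}, and then check that all transition constants introduced by the pullback are bounded uniformly in $\epsilon$.

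First, using the partition of unity $\{\xi_l^\epsilon\}_{l=1}^L$ from Section~\ref{sec::partition}, write $u = \sum_{l=1}^L \xi_l^\epsilon u$ on $\Omega^\epsilon$. For each $l$, the function $(\xi_l^\epsilon u) \circ \theta_l^\epsilon$ is supported in the fixed set $\mathcal{B}_l$ (which equals $B^+$ for boundary charts and $B$ for interior charts). Since $H^s(B)$ and $H^s(B^+)$ embed into $L^\infty$ with a fixed constant $C_0$ (depending only on $s$ and the geometry of the unit ball) for $s>d/2$, we get
\begin{equation*}
\max_{y \in \mathcal{B}_l} \bigl| (\xi_l^\epsilon u)(\theta_l^\epsilon(y)) \bigr| \le C_0 \bigl\| (\xi_l^\epsilon u) \circ \theta_l^\epsilon \bigr\|_{s,\mathcal{B}_l}.
\end{equation*}
Since $\theta_l^\epsilon$ is a diffeomorphism from $\mathcal{B}_l$ onto its image in $\Omega^\epsilon$, the left side equals $\max_{x \in \theta_l^\epsilon(\mathcal{B}_l)} |(\xi_l^\epsilon u)(x)|$, and summing over $l$ gives $\max_{x\in\Omega^\epsilon} |u(x)|$ up to the factor $L$.

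The key step is the chart-change estimate
\begin{equation*}
\bigl\| (\xi_l^\epsilon u) \circ \theta_l^\epsilon \bigr\|_{s,\mathcal{B}_l} \le C_l^\epsilon \, \|\xi_l^\epsilon u\|_{s,\theta_l^\epsilon(\mathcal{B}_l)},
\end{equation*}
and the claim is that $C_l^\epsilon$ can be bounded by a constant independent of $\epsilon$. By the three cases in Section~\ref{sec::charts}, each $\theta_l^\epsilon$ is obtained from the fixed $C^\infty$ chart $\theta_l$ either by a vertical translation by $-(1-\epsilon)e_d$, by the identity, or by the vertical dilation $(x_h, x_d) \mapsto (x_h, \frac{h+1-\epsilon}{h}(x_d - 3/2) + 1/2 + \epsilon)$. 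In every case, $\nabla \theta_l^\epsilon$ and its inverse have entries that are polynomial expressions in the bounded quantity $\frac{h+1-\epsilon}{h}$ and its reciprocal, both of which lie between $1$ and $\frac{h+1}{h}$ uniformly for $\epsilon \in (0,1)$. Consequently, the $C^k$ norms of $\theta_l^\epsilon$ and $(\theta_l^\epsilon)^{-1}$, which control $C_l^\epsilon$ by the standard chain rule computation for Sobolev norms, are bounded uniformly in $\epsilon$. The product rule also gives $\|\xi_l^\epsilon u\|_{s,\theta_l^\epsilon(\mathcal{B}_l)} \le C(\|\xi_l^\epsilon\|_{W^{s,\infty}})\|u\|_{s,\Omega^\epsilon}$, and $\|\xi_l^\epsilon\|_{W^{s,\infty}(\Omega^\epsilon)} = \|\xi_l\|_{W^{s,\infty}(\Omega)}$ by the definition $\xi_l^\epsilon \circ \theta_l^\epsilon = \xi_l \circ \theta_l$ together with the uniform control of $\nabla \theta_l^\epsilon$.

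Combining these estimates and summing over the fixed finite collection $l = 1,\dots, L$ yields a constant $C>0$ that depends only on $s$, $L$, the fixed charts $\theta_l$, the fixed cut-offs $\xi_l$, and $h$, but not on $\epsilon$. The only real obstacle is the one just addressed: one has to verify that the dilation factor $\frac{h+1-\epsilon}{h}$ and its reciprocal do not blow up as $\epsilon \to 0$, which is clear since $0 < \epsilon \ll 1$.
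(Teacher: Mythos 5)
Your proof is correct but takes a genuinely different route from the paper. The paper's argument is a one-sentence geometric observation: the $L^\infty$--Sobolev embedding constant on a bounded domain is controlled by the radius $r$ of the largest ball $B(x,r)\subset\overline{\Omega^\epsilon}$ that fits around each interior point (equivalently, a uniform interior cone condition), and by Definition \ref{def-dino-e} this radius does not shrink as $\epsilon\to 0$ since the dinosaur neck is only stretched vertically, not thinned. You instead localize to the fixed model domains $B$ and $B^+$ via the charts $\theta_l^\epsilon$ and the partition of unity $\xi_l^\epsilon$, invoke the embedding there, and then track that the Jacobians of $\theta_l^\epsilon$ and $(\theta_l^\epsilon)^{-1}$ as well as the pulled-back cutoffs $\zeta_l=\xi_l^\epsilon\circ\theta_l^\epsilon$ are bounded uniformly in $\epsilon$ because the only $\epsilon$-dependence is the vertical dilation factor $\frac{h+1-\epsilon}{h}\in[1,\frac{h+1}{h}]$ and its reciprocal. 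Your approach is longer but more explicitly tied to the chart machinery of Section \ref{sec::charts} (and in fact it is precisely the mechanism behind the paper's proofs of Lemmas \ref{lemma1} and \ref{lemma3}), whereas the paper's argument is shorter and avoids any localization. One small inaccuracy worth flagging: you write $\|\xi_l^\epsilon\|_{W^{s,\infty}(\Omega^\epsilon)}=\|\xi_l\|_{W^{s,\infty}(\Omega)}$; this is not an exact equality since the chain rule in $\xi_l^\epsilon=(\xi_l\circ\theta_l)\circ(\theta_l^\epsilon)^{-1}$ brings in the Jacobian factor, but the resulting uniform-in-$\epsilon$ bound that you actually need does hold for exactly the reason you give, so the conclusion stands.
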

\begin{proof} 
The constant is determined by the radius $r$ of the smallest ball $B(x,r)$ for $x \in \Omega^ \epsilon $, such that 
$B(x,r) \subset \overline{ \Omega ^ \epsilon }$.  By Definition \ref{def-dino-e} of the domains $\Omega^ \epsilon $, $r$ does not depend on $ \epsilon $,
and hence the Sobolev constant $C$ only depends on $\Omega$.
\end{proof}  

\begin{lemma}[Trace theorem on $\Omega^ \epsilon $]\label{lemma3}
Independent of $ \epsilon$, there exists a constant $C>0$ which depends only
on the domain $\Omega$, such that for $ s\in( {\frac{1}{2}},3] $
$$
\|u\|_{s-{\frac{1}{2}} , \Gamma^ \epsilon } \le C \|u\|_{s , \Omega^ \epsilon }  \ \ \forall u \in H^s( \Omega^ \epsilon ) \,.
$$
\end{lemma}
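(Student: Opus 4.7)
The plan is to reduce the trace estimate on $\Omega^\epsilon$ to the standard (flat) trace theorem on the half-ball $B^+$ via the boundary charts $\theta_l^\epsilon$ from Section \ref{sec::charts}, and then verify that every constant arising from the localization procedure can be bounded uniformly in $\epsilon$. The general strategy is the classical one: write $u = \sum_{l=1}^K \xi_l^\epsilon u$ using the partition of unity from Section \ref{sec::partition}, pull each piece back by $\theta_l^\epsilon$ to a function $u_l := (\xi_l^\epsilon u) \circ \theta_l^\epsilon$ supported in $\overline{B^+}$, and apply the standard trace inequality
$$\| u_l \|_{s-\frac{1}{2}, B^0} \le C_0 \, \| u_l \|_{s, B^+} ,$$
which holds for $s \in (1/2, 3]$ with a fixed constant $C_0$ depending only on the half-ball. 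Summing the resulting estimates over $l=1,\dots,K$ yields the required bound on $\Gamma^\epsilon$.

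The point requiring care is that the change of variables between $U_l \cap \Omega^\epsilon$ and $B^+$, as well as the corresponding change of variables on the boundary piece, produces constants involving $\|\nabla \theta_l^\epsilon\|_{L^\infty}$, $\|\nabla (\theta_l^\epsilon)^{-1}\|_{L^\infty}$, higher derivatives of $\theta_l^\epsilon$ up to order $\lceil s \rceil$, and the Jacobian $\det \nabla \theta_l^\epsilon$ (for both interior and boundary integrals). According to Definition \ref{def-dino-e} and the construction of $\theta_l^\epsilon$ in Section \ref{sec::charts}, each $\theta_l^\epsilon$ is obtained from the fixed smooth chart $\theta_l$ by one of three operations: (i) a vertical dilation by the factor $(h+1-\epsilon)/h$, which lies in the compact interval $[1, (h+1)/h]$ for $\epsilon\in(0,1)$; (ii) the identity modification $\theta_l^\epsilon = \theta_l$; or (iii) a rigid vertical translation $\theta_l - (1-\epsilon)e_d$. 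All three operations leave the derivatives of all orders of $\theta_l^\epsilon$ and of $(\theta_l^\epsilon)^{-1}$ bounded in $L^\infty$ by constants that depend only on the original $\theta_l$ (hence only on $\Omega$), and the Jacobian $\det\nabla \theta_l^\epsilon$ is a strictly positive constant of the form $C_l$ or $\frac{h+1-\epsilon}{h}C_l$, which is bounded above and below independently of $\epsilon$. Likewise, by Section \ref{sec::partition} the cut-offs $\xi_l^\epsilon$ satisfy $\| \zeta_l\|_{k,\mathcal{B}_l}\le C$ with $C$ independent of $\epsilon$.

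Putting these pieces together, each localized estimate
$$\| (\xi_l^\epsilon u)\|_{s-\frac{1}{2}, U_l\cap \Gamma^\epsilon} \le C \| u_l \|_{s-\frac{1}{2}, B^0} \le C C_0 \| u_l\|_{s, B^+} \le C \| \xi_l^\epsilon u\|_{s, U_l\cap\Omega^\epsilon} \le C\|u\|_{s,\Omega^\epsilon}$$
holds with constants $C$ independent of $\epsilon$. Summing over $l=1,\dots,K$ and using the bounded overlap of the covering $\{U_l\}$ gives $\|u\|_{s-\frac{1}{2},\Gamma^\epsilon} \le C\|u\|_{s,\Omega^\epsilon}$ with $C$ depending only on $\Omega$. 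The only subtlety, which one might initially expect to cause trouble, is that the cylindrical ``neck'' is being stretched as $\epsilon\to 0$; however, since the stretching is a linear rescaling in a single coordinate by a factor staying in a fixed compact subinterval of $(0,\infty)$, it has no effect on any of the chartwise constants, and the argument goes through without any loss.
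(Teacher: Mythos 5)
Your proof is correct and follows essentially the same approach as the paper: localize to the boundary charts $\theta_l^\epsilon$, apply the standard trace theorem on the fixed half-ball $B^+$, and then verify that the change-of-variable and partition-of-unity constants are $\epsilon$-independent because each $\theta_l^\epsilon$ differs from $\theta_l$ only by a uniformly bounded vertical dilation, a rigid translation, or not at all. You are slightly more explicit than the paper in invoking the partition of unity $\{\xi_l^\epsilon\}$ and the bounded overlap of the cover to assemble the global estimate, but this is the same argument spelled out in more detail.
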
 
\begin{proof} 
From the standard trace theorem in $B^+$,  we have the existence of a constant $C>0$ such that for any boundary chart,
$$
\|u\circ\theta_l^\epsilon\|_{s-{\frac{1}{2}} , B_0 } \le C \|u\circ\theta_l^\epsilon\|_{s , B^+ }  \ \ \forall u \in H^s( \Omega^ \epsilon ) \,.
$$
Now,  since $\theta_l^\epsilon$ is either a chart $\theta_l$ for the domain $\Omega$  or a vertical dilation of such a chart with a uniform bounded from below and above as is made precise in Section \ref{sec::charts}, this implies that by the chain rule, 
$$
\|u\|_{s-{\frac{1}{2}} , \theta_l^\epsilon(B_0) } \le C \|u\|_{s , \theta_l^\epsilon(B^+) }  \ \ \forall u \in H^s( \Omega^ \epsilon ) \,.
$$ 
Since $\Gamma^\epsilon$ is the union of all $\theta_l^\epsilon(B_0)$, $1\le l\le K$,  the above inequality implies the result.
\end{proof} 

%\begin{lemma}[Extension theorem on $\Omega^ \epsilon $]\label{lemma4}  Let $O\subset \mathbb{R}  ^d $ be a bounded open set, and let $K \subset
%O$ be compact such that $ \Omega^ \epsilon \subset K \subset O$.  Then, for $k\ge 0$ there exists a  Sobolev extension operator 
%$E: H^k(\Omega^ \epsilon ) \to
%H^k( \mathbb{R}  ^d)$ such that for each $u\in H^k ( \Omega^\epsilon )$, $Eu =u$ a.e. in $\Omega^ \epsilon $, the support of $Eu$ is contained in $O$
%and
%for a constant $C>0$ which depends only
%on the domain $\Omega$, 
%$$
%\|Eu\|_{k, \mathbb{R}  ^d } \le  C\|u\|_{k , \Omega^ \epsilon }  \ \ \forall u \in H^k( \Omega^ \epsilon ) \,.
%$$
%\end{lemma} 
%\begin{proof} 
%\end{proof} 

\section{The sequence of initial velocity fields $u_0^ \epsilon $}\label{sec6}
\subsection{Constructing the sequence of initial velocity fields $\uu$}\label{sec::u0}
As described in Definition \ref{def-dino-e}, 
near the intended splash (or self-intersection) point, the open set $\Omega^ \epsilon $ consists of two sets: the upper set $\omega^ \epsilon _ +$ and the lower set 
$\omega_-$ whose boundary  contains the flat ``dinosaur belly'' at $ x_d=0$, as shown in Figure \ref{fig_initialconditions}.   We 
We let $X_+^ \epsilon $ denote the point which has the smallest vertical coordinate in $\partial \omega^ \epsilon _+$.    Directly below, we let
$X_-$ be the point in $\partial \omega_- \cap \{ x_d=0\}$ with the same horizontal coordinate as $X_+^ \epsilon $.   Without loss of generality, we
set $X_-$ to be the origin of $ \mathbb{R}  ^d$.
 
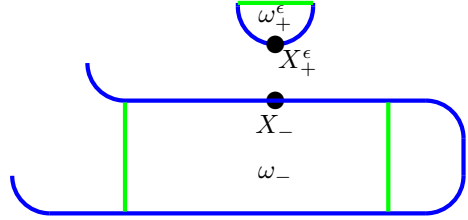
\begin{figure}[h]
 \begin{tikzpicture}[scale=.5]      
         \draw[color=blue,ultra thick] (6,0.5) arc (-90:0:1cm);
         \draw[color=blue,ultra thick] (6,0.5) arc (270:180:1cm);
%           \draw[color=green,ultra thick] plot[smooth,tension=.6] coordinates{   (5,1.5) (7,1.5)  }; 
%            \draw[color=green,ultra thick] plot[smooth,tension=.6] coordinates{   (5,3) (7,3)  }; 
             
%         
%         \draw[color=blue,ultra thick] (5,3) arc (00:60:2cm);
%         \draw[color=blue,ultra thick] (7,3) arc (00:60:4cm);

%        \draw[color=blue,ultra thick] plot[smooth,tension=.6] coordinates{   (1,0) (1,2) (1,3) }; 
%        \draw[color=blue,ultra thick] plot[smooth,tension=.6] coordinates{   (-1,-3) (-1,2) (-1,3) };

        \draw[color=blue,ultra thick] plot[smooth,tension=.6] coordinates{   (5,1.5)  (5,1.6) };
        \draw[color=blue,ultra thick] plot[smooth,tension=.6] coordinates{  (7,1.6)  (7,1.5) };
         \draw[color=green,ultra thick] plot[smooth,tension=.6] coordinates{  (7,1.6) (5,1.6) };
        
        \draw[color=blue,ultra thick] (1,0) arc (180:270:1cm);
        \draw[color=blue,ultra thick] (-1,-3) arc (180:270:1cm);
         \draw (6,1.2) node { $\omega_+^ \epsilon $}; 
          \draw (6.,.5) node { $\newmoon$}; 
          \draw (6.6,0) node { $X_+^ \epsilon $}; 
           
             \draw (6.,-1) node { $\newmoon$}; 
               \draw (6,-1.7) node { $X_- $}; 
         
        \draw[color=blue,ultra thick] plot[smooth,tension=.6] coordinates{   (2,-1) (10,-1)  }; 
        \draw[color=blue,ultra thick] plot[smooth,tension=.6] coordinates{   (0,-4) (10,-4)  }; 
        
         \draw[color=blue,ultra thick] (10,-1) arc (90:0:1cm);
         \draw[color=blue,ultra thick] (10,-4) arc (-90:0:1cm);
         
          \draw[color=blue,ultra thick] plot[smooth,tension=.6] coordinates{   (11,-2) (11,-3)  };
          
          \draw (6,-3) node { $\omega_-  $}; 
          \draw[color=green,ultra thick] plot[smooth,tension=.6] coordinates{   (2,-1.05) (2,-3.95)  }; 
   \draw[color=green,ultra thick] plot[smooth,tension=.6] coordinates{   (9,-1.05) (9,-3.95)  }; 
%          \draw (7,5) node { $\Gamma $}; 

  \end{tikzpicture} 
      \caption{{\footnotesize  In a neighborhood of the intended splash point, we suppose that $\Omega^ \epsilon $ consists of two sets:  the upper set $\omega_+^ \epsilon $
      and  the lower set $\omega_-$ containing the horizontally flat ``dinosaur belly.''   The point $X_+^ \epsilon $ is at a distance $ \epsilon $ from the
      set $\omega_-$ and the point $X_-$ is assumed to be the origin in $ \mathbb{R}  ^d$.
     }}\label{fig_initialconditions}
\end{figure}

We choose a  smooth function $b_0^\epsilon \in C^ \infty ( \Gamma ^ \epsilon ) $ such that $b_0^ \epsilon = -1$  in a small neighborhood of 
$X_+^ \epsilon $ on $\partial \omega_+^ \epsilon $, $b_0^ \epsilon =0$ on $\partial \omega_- $,  $b_0^ \epsilon =0$ on $\partial \omega^ \epsilon\cap \Gamma^\epsilon  $,  $\int_{ \Gamma ^ \epsilon } b_0^ \epsilon \, dS=0$, and satisfying  the estimate
\begin{equation}\label{b-est}
\|b_0^ \epsilon \|_{2.5,\Gamma^ \epsilon } \le m_0 < \infty   \,,
\end{equation} 
where $m_0$ does not depend on $ \epsilon $.

We define the initial velocity field $u_0^ \epsilon $ at $t=0$ as the solution to the following Stokes problem:
\begin{subequations}
\label{Stokes2}
\begin{alignat}{2}
- \Delta u_0^ \epsilon  + \nabla r_0^ \epsilon & = 0  \ && \text{ in } \Omega^ \epsilon\,,  \\
\operatorname{div} u_0^ \epsilon  &= 0  \ \ && \text{ in } \Omega^ \epsilon  \,,  \\
[\operatorname{Def} u_0^ \epsilon \cdot N^ \epsilon ] \cdot \tau _ \alpha ^ \epsilon  &=0 \ \ && \text{ on } \Gamma^ \epsilon \,, \\
u_0^ \epsilon \cdot N^ \epsilon  &=b^ \epsilon \ \ && \text{ on } \Gamma^ \epsilon \,, 
\end{alignat}
\end{subequations}
with $N^ \epsilon $ denoting the outward unit normal to $ \Gamma  ^ \epsilon $ and $\tau_ \alpha ^ \epsilon $,  $\alpha =1,2$ denoting an orthonormal
basis of the tangent space to
 $\Gamma ^ \epsilon $ (if the dimension $d=2$, then there is only one tangent vector).
Using the  regularity theory of this elliptic system (see, for example, \cite{SoSc1973} or  \cite{AmSe2011} and references therein), together with the proof 
of Lemma \ref{lemma1}, for a constant independent of $ \epsilon >0$, 
\begin{equation}\label{u0-est}
\|u_0^ \epsilon \|_{3, \Omega^ \epsilon } \le C \|b^ \epsilon \|_{2.5,\Gamma^ \epsilon } \le C \, m_0\,.
\end{equation} 
The boundary condition (\ref{Stokes2}c) ensures that  $u_0^ \epsilon $ satisfies (\ref{comp}).

\subsection{The initial pressure function $p_0^ \epsilon $}

The initial pressure function $p_0^ \epsilon $ at $t=0$ then satisfies
\begin{subequations}
  \label{p0}
\begin{alignat}{2}
- \Delta p_0^ \epsilon   &=  (u_0^ \epsilon )^i,_j (u_0^ \epsilon )^j,_i   \ \  \ &&\text{in} \ \ \Omega^ \epsilon  \,,\\
 p_0^ \epsilon  &=  N_0^ \epsilon  \cdot \left[ \nu   \operatorname{Def} u_0^ \epsilon  \cdot N_0^ \epsilon  \right] \ \ &&\text{on} \ \ \Gamma^ \epsilon   \,,
\end{alignat}
\end{subequations}
so that using the same proof as that of Lemma \ref{lemma1}, we have the following $ \epsilon $-independent elliptic estimate:
\begin{equation}\label{p0-est}
\|p_0^ \epsilon \|_{2, \Omega^ \epsilon } \le C \left[ \|u^0 _\epsilon \|_{3,\Omega^ \epsilon}+ \|u^0_ \epsilon \|^2_{3,\Omega^ \epsilon } \right]  \le C \, 
\mathcal{P} ( m_0) \,,
\end{equation} 
where we use $\mathcal{P} $ denote denote a generic polynomial function that depends only on $\Omega$.

\section{A priori estimates}\label{section7}

Let $\Omega^ \epsilon  $ denote the dinosaur domain shown in Figure \ref{fig_dino}, and let $\theta_l$ denote the system of local charts for $\Omega^ \epsilon  $
as defined in (\ref{normalchart}).
By denoting $\eta_l = \eta \circ \theta_l$ we see that
$$
\eta_l(t): B^+ \to  \Omega(t) \ \text{ for } \ \ l=1,...,K \,.
$$
We set $v_l = u \circ \eta_l$, $q_l = p \circ \eta_l$ and $A_l= [ D \eta_l ] ^{-1} $, $J_l = C_l$ (where $C_l>0$ is a constant, and $a_l = J_l A_l$.  
 The unit normal
$n_l$ is defined as $ \mathfrak{g}  ^ {-\frac{1}{2}} \frac{\p \eta_l}{\p x_1} \times \frac{\p \eta_l}{\p x_2} $ if $d=3$ and by 
$\mathfrak{g}  ^ {-\frac{1}{2}}{ \frac{\p \eta_l}{\p x_1}}^ \perp$ if $d=2$. 

It follows that for $l=1,...,K$,
\begin{subequations}
\label{localNS}
\begin{alignat}{2}
\eta_l(t) &= \theta_l + \int_0^t v_l\ \ && \text{ in } B^+ \times [0,T] \,, \label{localNS.a0} \\
\p_t  v_l  + A_l^T \nabla q_l   &=  \Delta _ {\eta_l} v_l \ \ && \text{ in } B^+ \times (0,T] \,, \label{localNS.a} \\
\operatorname{div} _{\eta_l} v_l &=0 \ \ && \text{ in } B^+ \times [0,T] \,,\label{localNS.b}  \\
\nu \operatorname{Def} _ {\eta_l} v_l \cdot n_l - q_l \, n_l &=0 \ \ && \text{ on } B^0 \times [0,T] \,,\label{localNS.c}  \\
(\eta_l,v_l)  &=(\theta_l,u_0 \circ \theta_l ) \ \  \ \ && \text{ in } B^+ \times \{t=0\} \,, \label{localNS.e}
\end{alignat}
\end{subequations}
where we have set $\nu=1$.

\begin{definition}[Higher-order energy function]   For each $t\in[0,T]$, we define the higher-order energy function 
\begin{align*} 
E^\epsilon (t) & =  1+ \| \eta( \cdot ,t)\|_{3, \Omega^ \epsilon  }^2 +  \| v ( \cdot ,t)\|_{2, \Omega^ \epsilon  }^2  
+ \int_0^t \| v ( \cdot ,s)\|_{3, \Omega^ \epsilon  }^2 ds
%+  \| q^ \epsilon ( \cdot ,t)\|_{1, \Omega^ \epsilon  }^2 
+ \int_0^t \| q( \cdot ,s)\|_{2, \Omega^ \epsilon  }^2 ds  \\
& \qquad +   \| v_t( \cdot ,t)\|_{0, \Omega^ \epsilon  }^2  
+ \int_0^t \| v_t ( \cdot ,s)\|_{1, \Omega^ \epsilon  }^2 ds 
\end{align*} 
We then set  $M_0 = \mathcal{P} ( E^ \epsilon (0))$ where $\mathcal{P} $ denotes a generic polynomial whose coefficients depend only on $\Omega$.
The constant $M_0$ is then equal to $\mathcal{P} (m_0)$, a polynomial function of the constant $m_0$ introduced in (\ref{u0-est}).
\end{definition} 

\begin{theorem}\label{prop1}  Assuming that $\Gamma(t)$ does not self-intersect, independent of $ \epsilon  >0$, there exists a time $T>0$ and a constant $C>0$ such that the solution 
\begin{align*} 
%\eta &  \in C([0,T], H^3( \Omega^ \epsilon )) \,, \\
v \in C([0,T], H^2( \Omega^ \epsilon )) \cap L^2(0,T; H^3(\Omega^ \epsilon )) \,, \ \ 
q  \in L^2(0,T; H^2(\Omega^ \epsilon ))
\end{align*} 
 to
(\ref{NSlag}) satisfies the a priori estimate:
\begin{align}
 \max_{t\in [0,T]} E^ \epsilon (t)    \le C\,  M_0   \,.   \label{main-est}
\end{align}
\end{theorem}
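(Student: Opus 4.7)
The plan is to adapt the energy-space approach for the Navier-Stokes free-boundary problem developed in \cite{CoSh2002}, while verifying at each step that every constant depends only on $\Omega$ and not on $\epsilon$. The $\epsilon$-independent Stokes estimate (Lemma \ref{lemma1}), Sobolev constant (Lemma \ref{lemma2}), and trace constant (Lemma \ref{lemma3}) already established in Section \ref{sec5}, together with the $\epsilon$-uniform initial data bound $\|u_0^\epsilon\|_3 \le C m_0$ from \eqref{u0-est} and the analogous bound \eqref{p0-est} for $p_0^\epsilon$, are precisely what allow the standard scheme to yield an $\epsilon$-uniform bound.

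I would begin with a bootstrap / continuity argument: assume that on some short interval $[0,T^*]$ one has $E^\epsilon(t) \le 2 M_0$, and then close the estimate with a better constant on a possibly shorter interval. From $\eta(\cdot,t)=e+\int_0^t v\, ds$ and the bootstrap hypothesis, I obtain $\|\eta(\cdot,t)-e\|_{3,\Omega^\epsilon} \le C T^{1/2} M_0^{1/2}$. Together with Lemma \ref{lemma2}, this forces $A$ to stay uniformly close to $I$ and $J=\det\nabla\eta=1$ throughout, so that $\operatorname{Def}_\eta$ is uniformly coercive via Korn's inequality (whose constant on $\Omega^\epsilon$ is again $\epsilon$-independent by the localization of Lemma \ref{lemma1}) and so that $A^TN^\epsilon$ remains nondegenerate.

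The main analytic steps are a time-differentiated energy estimate followed by an elliptic upgrade. Differentiating \eqref{NSlag}{b} in time, testing with $v_t$, and integrating over $\Omega^\epsilon$, the pressure term is eliminated via \eqref{NSlag}{c} and the viscous term combines with the time-differentiated boundary condition \eqref{NSlag}{d} to produce $\tfrac{d}{dt}\int |v_t|^2 + \nu \int |\operatorname{Def}_\eta v_t|^2$ up to error terms involving $A_t$ and $\eta_t$; these are controlled by the trace theorem (Lemma \ref{lemma3}) and Sobolev embedding (Lemma \ref{lemma2}), yielding
\[
\|v_t(t)\|_0^2 + \int_0^t \|v_t\|_1^2\, ds \le M_0 + C \int_0^t \mathcal{P}(E^\epsilon(s))\, ds.
\]
For the spatial regularity, at each fixed $t$ I rewrite \eqref{NSlag} as the Stokes problem
\[
-\Delta v + \nabla q = -v_t + \bigl[(-\Delta_\eta) - (-\Delta)\bigr] v + \bigl[\nabla - A^T \nabla\bigr] q, \qquad \operatorname{div} v = (\operatorname{div}-\operatorname{div}_\eta) v,
\]
with the boundary condition recast analogously, and apply Lemma \ref{lemma1} with its $\epsilon$-independent constant to obtain $\|v\|_3 + \|q\|_2 \le C(\|v_t\|_1 + \text{errors})$. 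The error terms are linear in $(A-I)$, $(J-1)$ and their derivatives, hence carry a factor of $\|\eta-e\|_3$ which is $O(T^{1/2})$ by the bootstrap, so that for sufficiently small $T$ these terms absorb into the left-hand side.

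Finally, combining these estimates with the analogous but simpler energy estimate for $v$ itself, and using that all error terms are polynomial in $E^\epsilon$ with $\epsilon$-independent coefficients, I arrive at
\[
E^\epsilon(t) \le C M_0 + C T\, \mathcal{P}\!\bigl(\max_{[0,t]} E^\epsilon\bigr) + C \int_0^t \mathcal{P}(E^\epsilon(s))\, ds,
\]
and a standard Gronwall / bootstrap closure gives $\max_{[0,T]} E^\epsilon \le C M_0$ for some $T>0$ depending only on $M_0$ (hence only on $m_0$ and $\Omega$). The principal obstacle is the bookkeeping in the elliptic upgrade: one must verify that every commutator between $\Delta, \nabla, \operatorname{div}$ and their $\eta$-twisted analogues, as well as every term arising from time-differentiating $A$ and $n$, produces an $\epsilon$-uniform constant. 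This reduces entirely to the $\epsilon$-uniformity of the three preliminary lemmas — which is why Section \ref{sec5}, and in particular the construction of the charts $\theta_l^\epsilon$ in Section \ref{sec::charts} as uniformly nondegenerate vertical dilations of the charts $\theta_l$ for $\Omega$, is the essential input that makes the classical energy scheme go through uniformly in $\epsilon$.
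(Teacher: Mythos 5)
Your overall architecture (bootstrap control of $\nabla\eta$ and $A$, a time-differentiated energy estimate, then an elliptic upgrade through the $\epsilon$-uniform Stokes lemma) follows the paper's outline, but two of your steps would fail as written. First, your elliptic upgrade invokes Lemma \ref{lemma1}, which is a Stokes estimate with \emph{Dirichlet} data $u=g$ on $\Gamma^\epsilon$, whereas the actual boundary condition (\ref{NSlag}d) is the traction condition $\nu\,\operatorname{Def}_\eta v\cdot n-qn=0$; ``recasting the boundary condition analogously'' does not produce Dirichlet data. To apply Lemma \ref{lemma1} one must first control the trace of $v$ in $H^{2.5}(\Gamma^\epsilon)$, and this is precisely the ingredient missing from your argument: the paper devotes its Step 2 to a tangential-derivative energy estimate in the boundary charts (testing $\bar\p^2$ of the localized momentum equation against $\zeta^2\bar\p^2 v$, the boundary integral vanishing because of (\ref{NSlag}d)), which yields the bound (\ref{cs6}) on $\int_0^T\|v\|^2_{2.5,\Gamma^\epsilon}$ and thereby the boundary data for the Stokes problem (\ref{stokes_for_v}). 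Neither your time-differentiated estimate nor the interior rewriting supplies this trace bound; and if you instead intend a Stokes estimate with traction boundary conditions, that is not the lemma you cite (nor one proved $\epsilon$-uniformly in the paper), and the pressure then enters the boundary data at top order, so the absorption is not the routine perturbation you describe.

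Second, in the time-differentiated estimate you assert that the pressure term is eliminated by testing with $v_t$ ``via (\ref{NSlag}c)''. But (\ref{NSlag}c) constrains $v$, not $v_t$: differentiating in time gives $\operatorname{div}_\eta v_t=-v^i,_j\,\p_t A^j_i\neq 0$, so $v_t$ does not lie in the space $\mathcal{V}(t)$ of $\operatorname{div}_\eta$-free test functions, and pairing the equation with $v_t$ leaves a term of the schematic form $\int_{\Omega^\epsilon} q_t\, v^i,_j\,\p_t A^j_i\,dx$, which involves $q_t$ and is not controlled by $E^\epsilon$. The paper circumvents this by constructing the corrector $w$ solving the divergence problem (\ref{w}), with the $\epsilon$-uniform estimates (\ref{est-w})--(\ref{est-wt}), and testing with $v_t-w$; some device of this kind is required and is not covered by your phrase ``error terms involving $A_t$ and $\eta_t$''.
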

\begin{proof}

The proof will proceed in five steps. 

\vspace{.1 in}
\noindent
{\bf Step 1. Estimates for  $ \nabla \eta$ and $A$.}  
Using (\ref{localNS}a), we see that
\begin{equation}\label{t-est}
\| \nabla \eta (\cdot , t)- \operatorname{Id} \|_{ 2, \Omega^ \epsilon } \le \left\| \int_0^t \nabla v( \cdot ,s)ds \right \|_{ 2, \Omega^ \epsilon } \le \sqrt{t} \sup_{s \in [0,t]} \sqrt{E^ \epsilon (t)}\,.
\end{equation} 
Thanks to Lemma \ref{lemma2}, there exists a constant $C>0$, independent of $ \epsilon $, such that
\begin{equation}\label{est-eta}
\| \nabla \eta (\cdot , t)- \operatorname{Id} \|_{ L^\infty(\Omega^ \epsilon )}  \le C \sqrt{t} \sup_{s \in [0,t]} \sqrt{E^ \epsilon (t)}\,.
\end{equation} 

 Since $\det \nabla \eta=1$, the matrix $A$ is simply the cofactor matrix of 
$ \nabla \eta$:
\begin{equation}\label{def-A}
A=
\left[
\begin{matrix}
- {\bf \eta},_2^\perp \\
 {\bf \eta},_1^\perp
\end{matrix}
\right] \text{ for } d=2, \text{ and } 
A=
\left[
\begin{matrix}
{\bf \eta,_2 \times \eta,_3}\\
{\bf \eta,_3 \times \eta,_1}\\
{\bf \eta,_1 \times \eta,_2}
\end{matrix}
\right] \text{ for } d=3\,,
\end{equation} 
where each row is a vector, and for a $2$-vector $x=(x_1,x_2)$, $x^\perp=(-x_2, x_1)$.

We make the following basic assumption, that we shall verify below in Step 5: for a constant $ 0 < \vartheta \ll 1$,  we suppose that $t \in[0,T]$ and that
$T$ is chosen sufficiently small so that
\begin{equation}\label{basic}
\sup_{t \in [0,T]} \| \nabla \eta (\cdot , t)- \operatorname{Id} \|_{ L^\infty(\Omega^ \epsilon )} \le \vartheta^{10} \,.
\end{equation} 
It follows from (\ref{def-A}), that since $\|A ( \cdot , t) - \operatorname{Id} \|_{L^\infty (\Omega^ \epsilon )}  \le \int_0^t \|A_t ( \cdot ,s) \|_{L^\infty (\Omega^ \epsilon )} ds$,
\begin{equation}\label{est-A1}
\sup_{t \in [0,T]} \|A ( \cdot , t) - \operatorname{Id} \|_{L^\infty (\Omega^ \epsilon )} +   \|A A^T( \cdot , t) - \operatorname{Id} \|_{L^\infty (\Omega^ \epsilon )} 
%+   \|A A^T \times A ( \cdot , t) - \operatorname{Id}\times \operatorname{Id} \|_{L^\infty (\Omega^ \epsilon )} 
\le  \vartheta\,.
\end{equation} 

\vspace{.1 in}
\noindent
{\bf Step 2. Boundary regularity.}
 We begin by considering a single boundary  chart $\theta_l : B^+ \to \Omega(t)$.   Let $\zeta_l$ denote the smooth cut-off function defined in Section \ref{sec::partition}.
Using equation (\ref{localNS}b), 
we compute  the following $L^2(B^+)$ inner-product:
\begin{equation}\label{cs0}
 \left( \zeta_l \bar \p^2 [  \p_t v_l -  \Delta _\eta v + A_l^T \,  \nabla q_l ] \ , \ \zeta_l \bar \p^2 v_l \right)_{L^2(B^+)} =0 \,.
\end{equation} 
To simplify the notation, we fix $l \in \{1,...,K\}$ and drop the subscript.  The chart $\theta_l$ was defined so that $\det \nabla \theta_l=C_l$ for a constant
$C_l>0$. Then (\ref{cs0}) can be written as
be written as
\begin{equation}\label{cs8}
\int_{B^+} \zeta ^2 \bar \p^2 v_t^i \,  \bar\p^2 v^i \, dx - \int_{B^+}  \zeta ^2 \bar\p^2 [ A^k_s A^j_s v ^i,_j],_k \, \bar\p^2 v^i \, dx + \int_{B_+} \zeta ^2 \bar\p^2
[A^k_i q],_k \, \bar \p^2 v^i \, dx =0 \,.
\end{equation}

Integration-by-parts with respect to $x_k$ shows that
\begin{align} \label{cs1}
0 =  {\frac{1}{2}} \frac{d}{dt} \| \zeta \bar \p^2 v(t)\|^2_{0,B^+}
+  \int_{B^+}  \bar \p^2 [ A^k_s A^j_s v^i,_j]\, \bar\p^2[ \zeta ^2 v^i],_k dx 
+  \int_{B^+}  \bar \p^2 [A^k_i  q]\, \bar\p^2[ \zeta ^2 v^i],_k dx 
\end{align}
where we have used the boundary condition (\ref{localNS}d) to show that the boundary integral vanishes.   Using $ \delta ^{jk}$ to denote the
Kronecker delta function,  we write (\ref{cs1}) as
\begin{align}
 &{\frac{1}{2}} \frac{d}{dt} \| \zeta \bar \p^2 v(\cdot , t)\|^2_{0,B^+}
+  \| \zeta \bar\p^2  \nabla v(t)  \|^2_{0,B^+}
= -   \int_{B^+}  \bar \p^2 [A^k_i  q]\, \bar\p^2[ \zeta ^2 v^i],_k dx \nonumber \\
& \qquad 
-  \int_{B^+}  \bar \p^2 [ (A^k_s A^j_s - \delta^{kj}) v^i,_j]\, \bar\p^2[ \zeta ^2 v^i],_k dx -  \int_{B^+} \left[ \bar \p^2  v^i,_k\, (\bar\p^2 \zeta ^2 v^i 
+ 2 \bar \p \zeta^2 \bar \p v^i),_k  + \xi,_k \bar \p^2 v^i \right] dx \,.  \label{cs2}
\end{align}
We integrate (\ref{cs2}) over the time interval $[0,T]$:
\begin{align}
 {\frac{1}{2}}  \| \zeta \bar \p^2 v(\cdot , t)\|^2_{0,B^+} + \int_0^T   \| \zeta \bar\p^2   v(t)  \|^2_{1,B^+} \le  M_0 %+  T P( \sup_{t \in [0,T]} E^ \epsilon (t)) 
 +\mathcal{I} _1
 + \mathcal{I} _2 +\mathcal{I} _3 \label{cs7}
\end{align}
where
\begin{align*} 
\mathcal{I} _1 & =  \int_0^T \int_{B^+} \left| \bar \p^2 [A^k_i  q]\, \bar\p^2[ \zeta ^2 v^i],_k\right| dx dt \,, \\
\mathcal{I} _2 & =\int_0^T  \int_{B^+} \left|  \bar \p^2 [ (A^k_s A^j_s - \delta^{kj}) v^i,_j]\, \bar\p^2[ \zeta ^2 v^i],_k \right| dxdt\,, \\
\mathcal{I} _3 & =\int_0^T  \int_{B^+} \left|  \bar \p^2  v^i,_k\, [ \bar\p^2 \zeta ^2 v^i  + 2 \bar \p \zeta^2 \bar \p v^i],_k  + \xi,_k \bar \p^2 v^i  \right| dxdt  \,.
\end{align*} 
Using the Sobolev embedding theorem and Lemma \ref{lemma2}
We estimate $ \mathcal{I} _1$ 
\begin{align*} 
\mathcal{I} _1 & \le  \underbrace{\int_0^T \int_{B^+} | \bar \p^2 q| \, | A^k_i \bar \p^2 v^i,_k| \, dxdt}_{\mathcal{I} _1^a} 
 +  \underbrace{ \int_0^T  \| q\|_{2, \epsilon }   \| A\|_{2, \Omega^ \epsilon} \|v\|_{2, \Omega ^ \epsilon } dt}_{ \mathcal{I} _1^b}  \\
 & \qquad \qquad
+   \underbrace{\int_0^T  \| q\|_{1.5, \epsilon }   \| A\|_{2, \Omega^ \epsilon} \|v\|_{3, \Omega ^ \epsilon } dt}_{ \mathcal{I} _1^c} \,.
\end{align*} 
To estimate the integral $ \mathcal{I} _1^a$, we use (\ref{localNS}c) to write 
$$
v^i,_{ k \alpha \beta } A^k_i = - A^k_i,_{ \alpha  \beta } v^i,_k - A^k_i,_ \beta v^i,_{k \alpha } - A^k_i,_ \alpha  v^i,_{k \beta  } \,,
$$
so that the term with three derivatives on $v$ is converted to a term with three derivatives on $\eta$ plus lower-order terms.
It follows that for $ \delta >0$, and a constant $C_ \delta $ (which blows-up as $ \delta \to 0$), 
$$
 \mathcal{I} _1^a \le \delta   \int_0^T \|q\|^2_{2, \Omega ^ \epsilon } dt + C_ \delta  T P( \sup_{t \in [0,T]} E^ \epsilon (t)) \,.
$$
The integral $ \mathcal{I} _1^b$ is estimated in the same way.  For the integral $ \mathcal{I} _1^c$ we use linear interpolation to estimate the norm
$\int_0^T  \| q\|_{1.5, \epsilon }$:
\begin{align*} 
\mathcal{I} _1^c & \le  \delta  \int_0^T \|v\|^2_{3, \Omega ^ \epsilon } dt 
 +  \delta  \int_0^T \|q\|^2_{2, \Omega ^ \epsilon } dt +
   C_ \delta  T P( \sup_{t \in [0,T]} E^ \epsilon (t)) \,.
\end{align*} 
It follows that
\begin{equation}\label{cs3}
\mathcal{I} _1 \le  M_0 +  C_ \delta  T P( \sup_{t \in [0,T]} E^ \epsilon (t)) + \delta   \sup_{t \in [0,T]} E^ \epsilon (t) \,.
\end{equation} 

Next, for the integral $ \mathcal{I} _2$, 
\begin{align*} 
\mathcal{I} _2 & \le  \underbrace{\int_0^T  \int_{B^+} \left|    (A^k_s A^j_s - \delta^{kj}) \bar \p^2 v^i,_j\, \bar\p^2[ \zeta ^2 v^i],_k \right| dxdt}_{ \mathcal{I} _2^a}
+ \underbrace{2 \int_0^T  \int_{B^+} \left|  \bar \p  (A^k_s A^j_s - \delta^{kj}) \bar \p v^i,_j\, \bar\p^2[ \zeta ^2 v^i],_k \right| dxdt}_{ \mathcal{I} _2^b} \\
& \qquad + \underbrace{\int_0^T  \int_{B^+} \left|  \bar \p^2  (A^k_s A^j_s - \delta^{kj})  v^i,_j\, \bar\p^2[ \zeta ^2 v^i],_k \right| dxdt }_{ \mathcal{I} _2^c} \,.
\end{align*} 
Using (\ref{est-A1}) and choosing $\vartheta < \delta $, 
$$
\mathcal{I} _2^a \le C_ \delta  T P( \sup_{t \in [0,T]} E^ \epsilon (t)) + \delta   \sup_{t \in [0,T]} E^ \epsilon (t) \,.
$$
In the same way as above, we again use Lemma \ref{lemma2}, together with linear interpolation for term $ \mathcal{I} _2^b$, to see that 
\begin{equation}\label{cs4}
\mathcal{I} _2 \le  M_0 +  C_ \delta  T P( \sup_{t \in [0,T]} E^ \epsilon (t)) + \delta   \sup_{t \in [0,T]} E^ \epsilon (t) \,.
\end{equation} 

The integral  $ \mathcal{I} _3$ is straightforward and also satisfies
\begin{equation}\label{cs5}
\mathcal{I} _3 \le  M_0 +  C_ \delta  T P( \sup_{t \in [0,T]} E^ \epsilon (t)) + C\delta   \sup_{t \in [0,T]} E^ \epsilon (t) \,.
\end{equation}

Summing over all of the  boundary charts $l=1,...,K$ in (\ref{cs7}), the inequalities (\ref{cs3})--(\ref{cs5}) together with the trace theorem, Lemma 
\ref{lemma3},  show that
\begin{equation}\label{cs6}
 \int_0^T   \|   v(\cdot , t)  \|^2_{2.5, \Gamma^ \epsilon } \le 
M_0 +  C_ \delta  T P( \sup_{t \in [0,T]} E^ \epsilon (t)) + \delta   \sup_{t \in [0,T]} E^ \epsilon (t)
\end{equation} 

\noindent
{\bf Step 3. Estimates for the time-differentiated problem.} We consider the time-differentiated version of (\ref{NSlag}) which we write as the
following system:
\begin{subequations}
\label{NSlagt}
\begin{alignat}{2}
\eta_t  & = v  \ && \text{ in } \Omega^ \epsilon  \times [0,T] \,,  \\
 v_{tt}   - \Delta _ \eta v_t + A^T \nabla q_t   &= - A^T_t \nabla q  + [ \p_t( A^j_s A^k_s) v,_k],_j \ \ && \text{ in } \Omega^ \epsilon  \times (0,T] \,,  \\
\operatorname{div} _\eta v_t &=-v^i,_j \p_t A^j_i \ \ && \text{ in } \Omega^\epsilon  \times [0,T] \,,  \\
\p_t\left[ \operatorname{Def} _ \eta v \cdot n - qn\right] &=0\ \ && \text{ on } \Gamma^ \epsilon  \times [0,T] \,,  \\
(\eta,v,v_t)  &=(e,u_0^ \epsilon ,u_1^ \epsilon ) \ \  \ \ && \text{ in } \Omega^\epsilon  \times \{t=0\} \,, 
\end{alignat}
\end{subequations}
where $u_1^ \epsilon = \Delta u_0^ \epsilon - \nabla p_0^ \epsilon $, with $u_0^ \epsilon $ defined in (\ref{Stokes2}) and $p_0^ \epsilon $ 
defined in (\ref{p0}); therefore,  independently of $ \epsilon >0$,
\begin{equation}\label{u1}
\| u_1^ \epsilon \|_{0, \Omega^\epsilon  } \le \mathcal{P} (m_0) \,.
\end{equation} 
We define the space of $ \operatorname{div} _\eta$-free vectors fields on $\Omega^\epsilon $ as
$$
\mathcal{V} (t) = \{ \phi  \in H^1(\Omega^\epsilon ; \mathbb{R}  ^d) \ : \ \operatorname{div} _{\eta( \cdot , t)} \phi =0 \}\,.
$$
Taking the $ L^2(\Omega^ \epsilon ) $ inner-product of equation (\ref{NSlagt}b) with a test function $\phi \in \mathcal{V} (t)$, we have that
\begin{equation}\label{weak}
\int_{ \Omega^\epsilon } v_{tt} \cdot \phi dx + \int_{\Omega ^ \epsilon } \p_t [A^k_s A^j_s v^i,_j] \, \phi^i,_k dx =\textcolor{black}{ \int_ \Omega q\, \p_t A^k_i \phi^i,_k \, dx } \ \ \forall \phi \in \mathcal{V} (t) \,.
\end{equation} 
Next, we define a vector field $w$ satisfying 
\begin{subequations}
  \label{w}
\begin{alignat}{2}
\operatorname{div} _ \eta  w  &= -v^i,_j \p_t A^j_i    \ \  \ &&\text{in} \ \ \Omega^ \epsilon  \,,\\
w  &=  \phi(t) n  \ \ &&\text{on} \ \ \Gamma^ \epsilon   \,,
\end{alignat}
\end{subequations}
where $\phi(t) = -\int_{ \Omega^\epsilon } -v^i,_j \p_t A^j_i dx / | \Gamma^ \epsilon |$.
A solution $w$ can be found by solving a Stokes-type problem, and according to the proof of Lemma 3.2 in \cite{ChSh2010}, for integers $k\ge 1$,
\begin{equation}\label{est-w}
\| w( \cdot , t) \|_{k, \Omega^\epsilon } \le C\left( \|  v^i,_j (\cdot , t) \,  \p_t A^j_i(\cdot , t) \|_{k-1, \Omega^\epsilon } +  \| \phi(t) n \|_{k-1/2, \Gamma^\epsilon }\right) \,,
\end{equation} 
where the constant $C$ is independent of $ \epsilon $ by Lemma \ref{lemma1}.
It follows from (\ref{est-w}) and (\ref{def-A}) that
\begin{equation}\label{est-w2}
\sup_{t \in [0,T]} \| w( \cdot , t) \|_{2, \Omega^\epsilon } + \int_0^T \| w( \cdot , t) \|_{3, \Omega^\epsilon } ^2 \le T P( \sup_{t \in [0,T]} E^ \epsilon (t)) \,.
\end{equation} 

   Similarly, 
\begin{subequations}
  \label{wt}
\begin{alignat}{2}
\operatorname{div} _ \eta  w_t  &=  -\left(w^i,_j \p_t A^j_i + \p_t(v^i,_j \p_t A^j_i )\right)   \ \  \ &&\text{in} \ \ \Omega^ \epsilon  \,,\\
w_t  &=  \left( \phi_t n \right) _t \ \ &&\text{on} \ \ \Gamma^ \epsilon   \,.
\end{alignat}
\end{subequations}
and
\begin{equation}\nonumber
\| w_t\|_{1, \Omega^\epsilon } \le C\left( \| w^i,_j \p_t A^j_i + \p_t(v^i,_j \p_t A^j_i)\|_{0, \Omega^\epsilon }  +
\| (\phi_t n)_t\|_{1/2, \Gamma^\epsilon } \right) \,,
\end{equation} 
so that
\begin{equation}\label{est-wt}
\int_0^T \| w_t\|_{1, \Omega^\epsilon }^2 \le T P( \sup_{t \in [0,T]} E^ \epsilon (t)) \,.
\end{equation} 
Now, because of (\ref{w}a),
$v_t -w \in \mathcal{V} (t)$, and we are allowed to set $\phi = v_t -w$ in (\ref{weak}).   We find that
\begin{align*} 
{\frac{1}{2}} \frac{d}{dt} \| v_t ( \cdot ,t) \|^2_{0, \Omega^\epsilon }+ \int_{\Omega ^ \epsilon } \p_t [A^k_s A^j_s v^i,_j]\, v_t^i,_k dx
&= \int_{ \Omega^\epsilon } v_{tt} \cdot w dx + \int_{\Omega^\epsilon }  \p_t (A^k_s A^j_s v^i,_j)  \, w^i,_k dx \\
& \qquad \qquad +  \int_ \Omega q\, \p_t A^k_i  \left[v_t^i,_k + w^i,_k \right] \, dx  \,.
\end{align*} 
and hence for $t \in (0,T)$,
\begin{align*} 
& {\frac{1}{2}}  \| v_t ( \cdot ,t) \|^2_{0, \Omega^\epsilon }+ \int_0^t\| \nabla v_t\|^2_{0, \Omega^\epsilon } ds = {\frac{1}{2}}  \|u_1 \|^2_{0, \Omega^\epsilon }
\overbrace{- \int_0^t \int_{\Omega ^ \epsilon }
 [A^k_s A^j_s  - \delta ^{kj}] v_t^i,_j\, v_t^i,_k dx ds}^{ \mathcal{J} _1}\\
& \qquad   \underbrace{- \int_0^t \int_{\Omega ^ \epsilon } \p_t [A^k_s A^j_s ] v^i,_j v_t^i,_k dxds}_{ \mathcal{J} _2}
+ \underbrace{\int_0^t \int_{ \Omega^\epsilon } v_{tt} \cdot w dxds}_{ \mathcal{J} _3}
+ \underbrace{ \int_0^t \int_{\Omega^\epsilon } \p_t [A^k_s A^j_s v^i,_j]\, w^i,_k dxds}_{ \mathcal{J} _4}\\
& \qquad  
+ \underbrace{ \int_0^t \int_ \Omega q\, \p_t A^k_i  \left[v_t^i,_k + w^i,_k \right] \, dxds}_{ \mathcal{J} _5} \,.
\end{align*} 
For $  \delta >0$ and using (\ref{est-A1}) with $\vartheta< \delta $, we see that
\begin{equation}\label{csj1}
| \mathcal{J} _1| \le \delta \sup_{t \in [0,T]} E^ \epsilon (t) \,.
\end{equation} 
Next, according to (\ref{def-A}) the components of $A$ are either linear ($d=2$) or quadratic ($d=3$) with respect to the components of $ \nabla \eta$;
hence, $\p_t A $ behaves like $ \nabla v$ for $d=2$ and like $ \nabla \eta \, \nabla v$ for $d=3$.   We consider the more difficult case that $d=3$ in which
case $\p_t (A A^T)$ behaves like $ \nabla \eta \, \nabla \eta \, \nabla \eta \, \nabla v$.   It follows by the Cauchy-Young inequality that for $ \delta >0$, we have that
\begin{equation}\label{csj2}
| \mathcal{J}_2| \le 
M_0 +     T \mathcal{P} (\sup_{t \in [0,T]}  E^ \epsilon (t)) +
\delta \sup_{t \in [0,T]} E^ \epsilon (t) \,.
\end{equation} 

To estimate $\mathcal{J} _3$, we integrate-by-parts in time:
\begin{align} 
|\mathcal{J} _3| & \le  \int_0^t \int_{ \Omega^\epsilon } |v_{t} \cdot w_t | dxds + \left| \left. \int_{ \Omega^\epsilon } v_{t} \cdot w  dx\right|^t_0 \right| 
\nonumber \\
& \le  \int_0^t \int_{ \Omega^\epsilon } |v_{t} \cdot w_t | dxds + M_0 +  \int_{ \Omega^\epsilon } | v_t ( \cdot ,t) w( \cdot ,0)| dx
+ \int_{ \Omega^\epsilon } \left| v_t ( \cdot ,t) \int_0^t w_t(\cdot ,s)ds\right| dx \nonumber \\
& \le M_0 +     T \mathcal{P} (\sup_{t \in [0,T]}  E^ \epsilon (t)) +
\delta \sup_{t \in [0,T]} E^ \epsilon (t) \,, \label{csj3}
\end{align} 
the last inequality following from the Cauchy-Young inequality and the estimates (\ref{est-w}) and (\ref{est-wt}).   
The integrals $ \mathcal{J} _4$ and $ \mathcal{J} _5$ (using (\ref{est-w2}) and (\ref{est-wt})) are
estimated in the same way as $ \mathcal{J} _2$ so that 
\begin{equation}\label{csj4}
| \mathcal{J}_4| + | \mathcal{J}_5| \le 
M_0 +     T \mathcal{P} (\sup_{t \in [0,T]}  E^ \epsilon (t)) +
\delta \sup_{t \in [0,T]} E^ \epsilon (t) \,.
\end{equation} 
Combining the estimates (\ref{csj1})--(\ref{csj4}), we find that
\begin{equation}\label{cs10}
\sup_{t \in [0,T]}   \| v_t ( \cdot ,t) \|^2_{0, \Omega^\epsilon }+ \int_0^T\| v_t\|^2_{1, \Omega^\epsilon } dt 
 \le 
M_0 +     T \mathcal{P} (\sup_{t \in [0,T]}  E^ \epsilon (t)) +
C\delta \sup_{t \in [0,T]} E^ \epsilon (t) \,.
\end{equation}

\vspace{.1 in}
\noindent
{\bf Step 4. Regularity for the velocity and pressure.}  
Next, we write equation (\ref{NSlag}b) as
\begin{subequations}
\label{stokes_for_v}
\begin{alignat}{2}
-\Delta v + \nabla q & = \operatorname{div} [ (AA^T - \operatorname{Id} ) \nabla v]  - (A^T -\operatorname{Id} ) \nabla q-v_t \ \ && \text{ in } \Omega^ \epsilon  \times (0,T] \,, \label{stokes_for_v.a} \\
\operatorname{div}  v &= - (A^j_i - \delta ^j_i) v^i,_j \ \ && \text{ in } \Omega^\epsilon  \times [0,T] \,,  \\
v & \in L^2(0,T; H^{2.5}(\Gamma^ \epsilon ) &&
\end{alignat}
\end{subequations}

The two inequalities (\ref{cs6}) and (\ref{cs10}), together with the Stokes regularity given in  Lemma \ref{lemma1}, show that 
$v \in L^\infty([0,T]; H^2(\Omega^ \epsilon ) ) \cap L^2(0,T; H^3( \Omega^\epsilon ))$ and satisfies
\begin{equation}\label{cs11}
\sup_{t \in [0,T]}   \| v ( \cdot ,t) \|^2_{2, \Omega^\epsilon }+ \int_0^T\| v\|^2_{3, \Omega^\epsilon } dt +  \int_0^T\| q\|^2_{2, \Omega^\epsilon } dt 
 \le 
M_0 +     T \mathcal{P} (\sup_{t \in [0,T]}  E^ \epsilon (t)) +
C\delta \sup_{t \in [0,T]} E^ \epsilon (t) \,.
\end{equation} 
By choosing $ \delta >0$ sufficiently small, we obtain that 
\begin{equation}
\label{cs12}
\sup_{t \in [0,T]} E^ \epsilon (t)
 \le 
M_0 +     T \mathcal{P} (\sup_{t \in [0,T]}  E^ \epsilon (t))\,,
\end{equation}
for a constant $M_0$ and a polynomial function $ \mathcal{P} $ which are both independent of $ \epsilon $.  

From the estimate (\ref{cs11}),  $v\in L^2(0,T; H^3(\Omega^ \epsilon ) )$,  and the estimate (\ref{cs10}),   $v_t\in L^2(0,T; H^1(\Omega^ \epsilon ) )$.
Using the partition of unity functions $\zeta_l$ defined in Step 2 above, we then see that for each chart
$\zeta_l v\in L^2(0,T; H^3(\mathcal{B}_l ) )$ where $ \mathcal{B}_l = B^+ $ for $l=1,...,K$, and $ \mathcal{B}_l = B$ for $l=K+1,...,L$.  Similarly,
$\zeta_l v_t\in L^2(0,T; H^1(\mathcal{B}_l ) )$.   It is then standard that 
$\zeta_l v\in C^0([0,T]; H^2( \mathcal{B} _l ) )$, and hence by summing over $l=1,...,L$, $ v\in C^0([0,T]; H^2( \Omega^\epsilon  ) )$.

Since the pressure satisfies the elliptic system:
\begin{alignat*}{2}
-\Delta _\eta q & = v^i,_rA^r_j v^j,_sA^s_i\ \ && \text{ in } \Omega^ \epsilon  \times (0,T] \,,  \\
q &=  n\cdot \left[ \text{Def}_\eta v\cdot n\right] \ \ && \text{ on } \Gamma^\epsilon  \times [0,T] \,,  
\end{alignat*}
we then infer that $q\in C^0([0,T]; H^1(\Omega^ \epsilon ) )$.  Then, using the momentum equation
 (\ref{stokes_for_v.a}), it follows that $v_t\in C^0([0,T]; L^2(\Omega^ \epsilon ) )$.

%\textcolor{red}
{  This then shows that $E^ \epsilon(t)$ is a  continuous function of time}. Following Section 9 in \cite{CoSh2006}, from (\ref{cs12}), we now may choose $T>0$ sufficiently small and independent of $ \epsilon $,  such that
\begin{equation}\label{apriori-est}
\sup_{t \in [0,T]} E^ \epsilon (t) \le 2M_0 \,.
\end{equation} 
 
 \vspace{.1 in}
\noindent
{\bf Step 5. Verifying the basic assumption (\ref{basic}).}  Having established (\ref{apriori-est}) on $[0,T]$ with $T$ independent of $ \epsilon $, for
any $\varepsilon>0$,  we may now use the formula (\ref{t-est}) to choose $T$ even smaller if necessary to ensure that (\ref{basic}) holds.   This concludes
the proof.
\end{proof} 

We now establish a more quantitative estimate in order to assess the continuity of $\bar \p^2 v(t , \cdot )$ in  $ L^2( \Omega^\epsilon ) $.

\begin{proposition} \label{prop2} For all $t\in [0,T]$, 
\begin{equation}\label{est-main2}
\max_{x \in \Omega^ \epsilon } \| \bar \p^2 ( v^ \epsilon ( \cdot ,t)  - u_0^ \epsilon )\|_{0, \Omega ^ \epsilon }^2 
+ \int_0^t \| \bar \p^2 ( v^ \epsilon(\cdot ,s)  - u_0^ \epsilon )\|_{1, \Omega ^ \epsilon }^2 ds \lesssim t^{1/2} \mathcal{P} (M_0) \,.
\end{equation} 
\end{proposition}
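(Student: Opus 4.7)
The plan is to adapt the Step~2 $\bar\partial^{2}$ energy estimate from the proof of Theorem~\ref{prop1}, applied to the difference $w^{\epsilon}:=v^{\epsilon}-u_{0}^{\epsilon}$ in place of $v^{\epsilon}$. The decisive observation is that $w^{\epsilon}(\cdot,0)\equiv 0$, so the initial-data contribution vanishes, and every term on the right-hand side can be arranged to carry an explicit positive power of $t$, which is the source of the $t^{1/2}$ decay.

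Concretely, in each boundary chart $\theta_{l}^{\epsilon}$, apply $\bar\partial^{2}$ to the Lagrangian momentum equation (\ref{NSlag}b) and take the $L^{2}(B^{+})$ inner product with $\zeta_{l}^{2}\bar\partial^{2}w^{\epsilon}$ (in place of $\zeta_{l}^{2}\bar\partial^{2}v^{\epsilon}$, as in (\ref{cs0})). Since $v^{\epsilon}_{t}=w^{\epsilon}_{t}$, the time-derivative term yields $\tfrac12\tfrac{d}{dt}\|\zeta_{l}\bar\partial^{2}w^{\epsilon}\|_{0,B^{+}}^{2}$. Integration by parts in the viscous term, after the substitution $v^{\epsilon}=w^{\epsilon}+u_{0}^{\epsilon}$, produces the dissipation $\|\zeta_{l}\bar\partial^{2}\nabla w^{\epsilon}\|_{0,B^{+}}^{2}$ plus a cross term $(\zeta_{l}^{2}\bar\partial^{2}\nabla u_{0}^{\epsilon},\bar\partial^{2}\nabla w^{\epsilon})_{L^{2}(B^{+})}$; the pressure term decomposes analogously. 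The boundary integral over $B^{0}$ vanishes exactly as in (\ref{cs1}), because (\ref{NSlag}d) is imposed on $v^{\epsilon},q^{\epsilon}$ and is therefore independent of which test function we choose.

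Integrating in $t$, using $w^{\epsilon}(\cdot,0)=0$, summing over all boundary and interior charts, and applying the trace theorem (Lemma~\ref{lemma3}) produces
\begin{equation*}
\|\bar\partial^{2}w^{\epsilon}(\cdot,t)\|_{0,\Omega^{\epsilon}}^{2}+\int_{0}^{t}\|\bar\partial^{2}\nabla w^{\epsilon}(\cdot,s)\|_{0,\Omega^{\epsilon}}^{2}\,ds\le\mathcal{R}(t),
\end{equation*}
where $\mathcal{R}(t)$ collects the $u_{0}^{\epsilon}$ cross term, the pressure contribution, and the analogues of $\mathcal{I}_{1},\mathcal{I}_{2},\mathcal{I}_{3}$ from Step~2. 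Each such term is controlled by inserting a $\sqrt{t}$ via Cauchy--Schwarz in time, exploiting the time-independence of $u_{0}^{\epsilon}$ and the dissipation term available for absorption; for example the cross term is handled as
\begin{equation*}
\int_{0}^{t}\bigl|(\zeta_{l}^{2}\bar\partial^{2}\nabla u_{0}^{\epsilon},\bar\partial^{2}\nabla w^{\epsilon})\bigr|\,ds\le\sqrt{t}\,\sqrt{M_{0}}\,\Bigl(\int_{0}^{t}\|\zeta_{l}\bar\partial^{2}\nabla w^{\epsilon}\|_{0,B^{+}}^{2}\,ds\Bigr)^{\!1/2}\le\tfrac14\int_{0}^{t}\|\zeta_{l}\bar\partial^{2}\nabla w^{\epsilon}\|_{0,B^{+}}^{2}\,ds+Ct\,M_{0}.
\end{equation*}
Adding the trivial estimate $\int_{0}^{t}\|\bar\partial^{2}w^{\epsilon}\|_{0,\Omega^{\epsilon}}^{2}\,ds\le t\max_{s\in[0,t]}\|\bar\partial^{2}w^{\epsilon}(s)\|_{0,\Omega^{\epsilon}}^{2}$ upgrades the dissipation norm to the full $H^{1}(\Omega^{\epsilon})$ norm on the left-hand side, and the inequality $t\le t^{1/2}$ on $[0,T]$ with $T\le 1$ delivers the claimed bound $\lesssim t^{1/2}\,\mathcal{P}(M_{0})$.

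The principal obstacle lies in the pressure contribution. The bootstrap absorption $\delta\sup_{[0,T]}E^{\epsilon}$ that closed Step~2 is not available here, because $\sup E^{\epsilon}\le 2M_{0}$ is of the same order as the quantity we are estimating and any residual $O(M_{0})$ term would destroy the $t^{1/2}$ decay. Every pressure error must instead be paired with a $\sqrt{t}$ factor obtained by time-integration against the fixed initial pressure $p_{0}^{\epsilon}$ --- using the Stokes regularity of Lemma~\ref{lemma1} for the $w^{\epsilon}$-equation to recover the $H^{2}$ control on $q^{\epsilon}-p_{0}^{\epsilon}$ --- so that each contribution scales as $t\,\mathcal{P}(M_{0})$ or better.
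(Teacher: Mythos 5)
Your overall strategy --- localize to boundary charts, take the $\bar\p^2$ energy estimate for the difference $w^\epsilon := v^\epsilon - u_0^\epsilon$, use $w^\epsilon(\cdot,0)=0$ to kill the initial-data contribution, and insert $\sqrt t$ via Cauchy--Schwarz in time against the a priori bounds from Theorem~\ref{prop1} --- coincides with the paper's, and your treatment of the $u_0^\epsilon$ cross term and the final step $t\le t^{1/2}$ are correct.

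The gap is in the pressure. The dangerous piece of $\mathcal{K}_1$ is $\int_0^T\int\zeta^2\,\bar\p^2 q\,A^k_i\,\bar\p^2 v^i,_k\,dx\,dt$, where $\bar\p^2$ lands on both $q$ and $v^i,_k$; Cauchy--Schwarz in time against $q\in L^2_tH^2$ and $v\in L^2_tH^3$ gives only $O(M_0)$ with no $\sqrt T$ factor --- exactly the no-decay residual you feared. The paper's fix is \emph{not} a Stokes estimate for $q^\epsilon-p_0^\epsilon$, but the Lagrangian divergence-free identity (\ref{NSlag}c): since $A^k_i v^i,_k=0$, one has $A^k_i\bar\p^2 v^i,_k = -\bar\p^2 A^k_i\,v^i,_k - 2\bar\p A^k_i\,\bar\p v^i,_k$, trading the two tangential derivatives on $\nabla v$ for two on $A$ (i.e.~on $\nabla\eta\in L^\infty_t H^2$). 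After this, only one $L^2$-in-time factor ($\bar\p^2 q$) remains, and $\int_0^T\|q\|_2\,\mathcal{P}(\|\eta\|_3,\|v\|_2)\,dt\le\sqrt T\,\bigl(\int_0^T\|q\|_2^2\,dt\bigr)^{1/2}\mathcal{P}(M_0)\le\sqrt T\,\mathcal{P}(M_0)$. Your proposed alternative does not close: the Stokes source for $(w^\epsilon,q^\epsilon-p_0^\epsilon)$ contains $v^\epsilon_t$, which Theorem~\ref{prop1} bounds by $\mathcal{P}(M_0)$ but which is not small in $t$ (indeed $v_t^\epsilon(0)=u_1^\epsilon\ne 0$), so Lemma~\ref{lemma1} yields $\|q^\epsilon-p_0^\epsilon\|_1=O(\mathcal{P}(M_0))$, not $O(\sqrt t\,\mathcal{P}(M_0))$; obtaining a sharper rate would itself require control of $\|v^\epsilon(t)-u_0^\epsilon\|_2$, i.e.~the very quantity Proposition~\ref{prop2} is proving, making the argument circular. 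Moreover, (\ref{NSlag}d) couples $v^\epsilon$ and $q^\epsilon$ nonlinearly and does not give a clean boundary condition for $q^\epsilon-p_0^\epsilon$. You must invoke (\ref{NSlag}c) to lower the order of the top pressure term, after which the $\sqrt t$ gain comes directly from the $L^2(0,T;H^2)$ bound on $q^\epsilon$ itself, with no pressure subtraction needed.
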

\begin{proof} 
We write $v(t) = v( \cdot ,t)$ and again set viscosity $\nu=1$. The difference $v(t) - \uu$ satisfies the equation 
\begin{equation}\nonumber
(v-\uu)_t - \Delta _ \eta( v- \uu) + A^T \nabla q = \Delta _ \eta \uu \,.
\end{equation} 
Following Step 2 in the proof of Theorem \ref{prop1}, and once again localize to a boundary chart $\theta_l$, $l=1,...,K$, with $ 
\det \nabla \theta_l=C_l$ and with cut-off functions $\zeta_l$, we obtain that
\begin{align}
0 &=  {\frac{1}{2}} \frac{d}{dt} \| \zeta \bar \p^2 [v(t)-\uu]\|^2_{0,B^+}
+  \int_{B^+}  \bar \p^2 [ A^k_s A^j_s (v-\uu),_j] \cdot  \bar\p^2[ \zeta ^2 (v-\uu)],_k dx  \nonumber \\
&\qquad \qquad\qquad
+  \int_{B^+}  \bar \p^2 [A^k_i  q]\, \bar\p^2[ \zeta ^2 v^i],_k dx  +  \int_{B^+}  \bar \p^2 [ A^k_s A^j_s {\uu},_j] \cdot  \bar\p^2[ \zeta ^2 (v-\uu)],_k dx\,,
 \label{cs101}
\end{align}
where we have dropped the explicit chart dependence on $l$ and where again,
the boundary integral terms  have vanished due to (\ref{NSlag}d).
We integrate (\ref{cs101}) over the time interval $[0,T]$:
\begin{align*}
  \| \zeta \bar \p^2 [v(t)- \uu]\|^2_{0,B^+} + \int_0^T   \| \zeta \bar\p^2   [v(t)-\uu]  \|^2_{1,B^+} \le |\mathcal{K} _1|
 + |\mathcal{K} _2| + |\mathcal{K} _3| + |\mathcal{K} _4| \,,
\end{align*}
where we are writing $\uu$ for $\uu \circ \theta_l$, and where
\begin{align*} 
\mathcal{K} _1 & =   \int_0^T \int_{B^+} \bar \p^2 [A^k_i  q]\, \bar\p^2[ \zeta ^2 (v-\uu)^i],_k dx dt \,, \\
\mathcal{K} _2 & =\int_0^T  \int_{B^+}   \bar \p^2 [ (A^k_s A^j_s - \delta^{kj}) (v-\uu),_j] \cdot  \bar\p^2[ \zeta ^2 (v-\uu)],_k  dxdt\,, \\
\mathcal{K} _3 & =\int_0^T  \int_{B^+}   \bar \p^2  (v-\uu)^i,_k\, [\ [ \bar\p^2 \zeta ^2 (v-\uu)^i  + 2 \bar \p \zeta^2 \bar \p (v-\uu)^i],_k\textcolor{black} {+\zeta^2,_k \bar\p^2 v^i]}  dxdt  \,, \\
\mathcal{K} _4 & =\int_0^T  \int_{B^+}  \bar \p^2 [ (A^k_s A^j_s  {\uu},_j] \cdot  \bar\p^2[ \zeta ^2 (v-\uu)],_k  dxdt \,.
\end{align*} 
We write
\begin{align*} 
\mathcal{K} _1\le   \underbrace{ \int_0^T \int_{B^+} \bar \p^2 [A^k_i  q]\, \bar\p^2[ \zeta ^2 v^i],_k dx dt }_{\mathcal{K} _1^a}  + 
\underbrace{\int_0^T \int_{B^+} \left| \bar \p^2 [A^k_i  q]\, \bar\p^2[ \zeta ^2 {\uu}^i],_k\right| dx dt }_{ \mathcal{K} _1^b} \,.
\end{align*} 
By (\ref{u0-est}) and (\ref{main-est}), we see that
$$
| \mathcal{K} _1^b | \le \sqrt{T} \mathcal{P} (M_0) \,.
$$
For the integral $ \mathcal{K} _1^a$, we focus on the integrand that arises when $\bar \p^2$ acts on {\it both} $q$ and $v^i,_k$, for all other derivative
combinations immediately give an integral bound of $ \sqrt{T} \mathcal{P} (M_0)$.   Using the Lagrangian divergence-free condition (\ref{NSlag}c),
\begin{align*} 
\left| \int_0^T \int_{B^+} \zeta^2 \bar \p^2q \, A^k_i \bar \p^2 v^i,_k dx dt \right| & \le 
\left| \int_0^T \int_{B^+} \zeta^2 \bar \p^2q \, \bar \p^2 A^k_i  v^i,_k dx dt\right| \\
& \qquad \qquad \qquad 
+ 2\left| \int_0^T \int_{B^+} \zeta^2 \bar \p^2q \, \bar \p A^k_i  \bar \p v^i,_k dx dt\right| \,.
\end{align*} 
An application of the Cauchy-Young inequality together with the Sobolev embedding theorem,  shows that
$$
| \mathcal{K} _1^a | \le \sqrt{T} \mathcal{P} (M_0) \,.
$$
For the integral $ \mathcal{K} _2$, we consider the case that $ \bar \p^2$ acts on $(A^k_s A^j_s - \delta^{kj}) $, all other terms immediately giv\textcolor{red}{ing} the
desired bound.   Using (\ref{t-est}) and (\ref{def-A}),  $ \| A A^T - \operatorname{Id} \|_{ L^\infty(B^+)} \le \sqrt{T}  \mathcal{P} (M)$, so that with (\ref{main-est}),
$$
| \mathcal{K} _2 | \le \sqrt{T} \mathcal{P} (M_0) \,.
$$
The integral $ \mathcal{K} _3$ and $\mathcal{K} _4$ are easily estimated using the Cauchy-Young inequality, the Sobolev embedding theorem, and
(\ref{main-est}).      We have thus established that
$$
  \| \zeta \bar \p^2 [v_l(t)- \uu]\|^2_{0,B^+} + \int_0^T   \| \zeta \bar\p^2   [v_l(t)- \uu \circ \theta_l]  \|^2_{1,B^+} \le \sqrt{T} \mathcal{P} ( M_0) \,.
$$
Summing over  $l =1,...,K$ then concludes the proof.
\end{proof}

\section{Proof of the Main Theorem}\label{section8}
Using the Lagrangian divergence condition (\ref{NSlag}c), we have that $ \operatorname{div} v = -(A^j_i \textcolor{black}{-}\delta ^j_i) v^i,_j$, which we write as
$ \operatorname{div} v = \textcolor{black}{-}(A- \operatorname{Id} ) : \nabla v$.   Then, since $ \operatorname{div} \uu =$,  for all $t \in [0,T]$, 
\begin{equation}\label{div-est}
\| \bar \p \operatorname{div}  (v -\uu) \|^2_{0, \Omega^\epsilon } \le \| \bar \p (A- \operatorname{Id} ) \, \nabla v \|^2_{0, \Omega^\epsilon }
+    \| (A- \operatorname{Id} ) \, \bar \p \nabla v \|^2_{0, \Omega^\epsilon } \le \sqrt{T} \mathcal{P} (M_0) \,.
\end{equation} 
Using (\ref{div-est}) together with (\ref{est-main2}), the normal trace theorem (see, for example,  (A.6) in \cite{CoSh2014a}) shows that 
$\bar \p^2 (v - \uu) \cdot N_ \epsilon   \in C([0,T; H^{ - {\frac{1}{2}} }(\Gamma^ \epsilon ))$ and
$$
 \| \bar \p^2 (v - \uu) \cdot N_ \epsilon\|^2_{ -1/2, \Gamma^ \epsilon } \le \sqrt{T} \mathcal{P} (M_0) \,,
$$
so that
\begin{equation}\nonumber
 \| (v - \uu) \cdot N_ \epsilon\|^2_{ 1.5, \Gamma^ \epsilon } \le \sqrt{T} \mathcal{P} (M_0) \,,
\end{equation} 
and hence by Lemma \ref{lemma2},  
\begin{equation}\label{cs200}
\max_{x \in \Gamma^ \epsilon } \| (v(x,t) - \uu) \cdot N_ \epsilon\| \le T ^{\frac{1}{4}}  \mathcal{P} (M_0) \ \ \forall t \in[0,T] \,.
\end{equation} 

Next, we consider the motion of the points $X_+^ \epsilon $ and $X_-$ given in Section \ref{sec::u0} (see Figure \ref{fig_initialconditions}).  Recall
that the unit normal $N_ \epsilon $ at both the points $X_+^ \epsilon = (0,0, \epsilon ) $ and $X_-=(0,0,0)$ is vertical, so by definition of $\uu$, we have that
$$
\uu ( X_+^ \epsilon ) \cdot N_ \epsilon = -1 \, \ \ \uu ( X_-)\cdot N_ \epsilon =0 \,, \ \ \text{ and } | X_+^ \epsilon - X_-|= \epsilon \,.
$$
Using Theorem \ref{prop1}, we 
 choose $\epsilon $ so small that $ 10 \epsilon < T$, where $[0,T]$ is the time interval of existence which is independent of $ \epsilon $, and we
consider the vertical displacement of the falling particle $X_+^ \epsilon $.   Since $X_+^ \epsilon \cdot e_d  = \epsilon $, and 
$$\eta(X_+^ \epsilon ,t)  \cdot e_d  = \epsilon + \int_0^t  v^d(X_+^ \epsilon , s)ds\,,$$
for $t= 10 \epsilon $, we have from (\ref{cs200}) that
$$
\eta^d(X_+^ \epsilon , 10 \epsilon )   < -8 \epsilon \,.
$$
Next, let $Z$ denote any point on $\partial \omega_- \cap \{x_d=0\}$.  Since $\uu (Z) \cdot N_ \epsilon =0$ and 
$ \eta(Z, 10 \epsilon ) = \int_0^{10\epsilon} v(Z,s)ds$, 
according to (\ref{cs200}),
$$
\eta(Z, 10 \epsilon ) \cdot e_d  \ge -c \epsilon^ {\frac{5}{4}}  \,, \ \ c= 10^ {\frac{5}{4}}  \mathcal{P} (M_0) \,.
$$
We then choose $ \epsilon >0$ sufficiently small so that $c \epsilon^ {\frac{5}{4}}  < 8\epsilon  $.    It follows that 
\begin{equation}
\label{cs201}
\eta(X_+^ \epsilon , 10 \epsilon ) \cdot e_d < \eta(Z, 10 \epsilon ) \cdot e_d \,.
\end{equation}

We next consider the horizontal displacement of the particle $X_+^\epsilon$ and 
any particle $Z$ on $\partial \omega_- \cap \{x_d=0\} \times [0,10\epsilon]$.
From the  estimate (\ref{apriori-est}),  for all time $t\in [0,10\epsilon]$, $\|v(\cdot,t)\|_{L^\infty(\Omega)}\le \mathcal{P}(M_0)$.

Therefore, for any $t\in [0,10\epsilon]$ and for $ \alpha =1,...,d-1$,  
$$|\eta^ \alpha (X_+^ \epsilon , t )| \le10 \epsilon \mathcal{P}(M_0) \text{ and }
| \eta^ \alpha (Z , t ) -Z^ \alpha | \le 10 \epsilon \mathcal{P}(M_0) \,, $$
showing that  the distance between the projection of the surface $\eta(\partial \omega_- \cap \{x_d=0\}, t)$ onto the plane $x_d=0$  and
the set  $\partial \omega_- \cap \{x_d=0\}$ is $O( \epsilon )$.   Since by  Definition \ref{def-dino-e}, the set $\partial \omega_- \cap \{x_d=0\}$ contains
a $d$$-$$1$-dimensional ball of radius $ \sqrt{ \epsilon }$ centered at the origin, we see that by choosing $ \epsilon $ sufficiently small
 the vertical line passing through $\eta(X_+^ \epsilon , t )$ must intersect the surface $\eta(\partial \omega_- \cap \{x_d=0\}, t)$ 
 for any $t\in [0,10\epsilon]$.  Now, since at $t=0$,  $X_+^\epsilon$ is directly (vertically) above $\partial \omega_- \cap \{x_d=0\}$,  and at  
 $t= 10 \epsilon$, from (\ref{cs201}),  
$\eta(X_+^ \epsilon , 10 \epsilon )$ is (vertically)  below $\eta(\partial \omega_- \cap \{x_d=0\},10\epsilon)$, then by continuity there  necessarily exists 
a time $0< T^* < 10 \epsilon $ at which $\eta(X_+^ \epsilon , T^* ) = \eta(Z , T^* )$ for some $Z \in
\textcolor{black}{\partial \omega_-\cap\{x_d=0\}}$. This concludes the proof of the main
theorem.

%so there exists a time $0< T^* < 10 \epsilon $ at which self-intersection of $\Gamma^ \epsilon (T^*)$ must occur. 

\section{The case of a general self-intersection splash geometry}\label{section9}
We now show how the analysis presented in the previous sections for the case of the ``dinosaur wave'' initial domain can be used to establish the 
existence of a splash singularity in a finite time $T^*$ for any domain whose boundary is arbitrarily close (in the $H^3$-norm) to any 
given self-intersecting surface of class $H^3$.   This generalization requires the  geometric constructions that we introduced in our previous work \cite{CoSh2014a}, coupled with a very minor adaptation of  the analysis of the previous sections.

We begin with the  definition of the splash domain that we gave in \cite{CoSh2014a}.

\subsection{The definition of the splash domain}\label{subsec:splashdomain}

\begin{enumerate}
\item
We suppose that $x_0  \in \Gamma:= \partial \Omega_s$ is the unique boundary self-intersection point,
 i.e., $\Omega_s$ is locally on each side of the tangent plane to $\partial\Omega_s=\Gamma_s$ at $x_0$.
 For all other boundary points, the domain is locally on one side of its boundary.   Without loss of
 generality, we suppose that
the tangent plane at $x_0$ is the horizontal plane $x_3-(x_0)_3=0$. 

\item We let $U_0$ denote an open neighborhood of $x_0$ in $ \mathbb{R}  ^3$, and then choose an additional $L$ open
sets $\{U_l\}_{l=1}^L$ such that the collection  $\{U_l\}_{l=0}^K$ is an open cover of $\Gamma_s$, and $\{U_l\}_{l=0}^L$ is an open cover of $\Omega_s$ and such that there exists a
sufficiently small open subset $ \omega \subset U_0$ containing $x_0$ with the property that 
$$\overline\omega \cap \overline{U_l} = \emptyset \ \text{ for all } \ l=1,...,L \,.$$
We set
\begin{align*} 
U_0^+ = U_0 \cap \Omega_s \cap \{ x_3 > (x_0)_3 \} \ \text{ and } U_0^- = U_0 \cap \Omega_s \cap \{ x_3 < (x_0)_3 \} \,.
\end{align*} 
Additionally, we assume that $\overline{U_0}\cap\overline{\Omega_s}\cap\{x_3=(x_0)_3\}=\{x_0\}$, which implies in particular that $U_0^+$ and $U_0^-$ are connected.   See Figure 9.1.

\begin{figure}[htbp]
\begin{center}
\includegraphics[scale = 0.4]{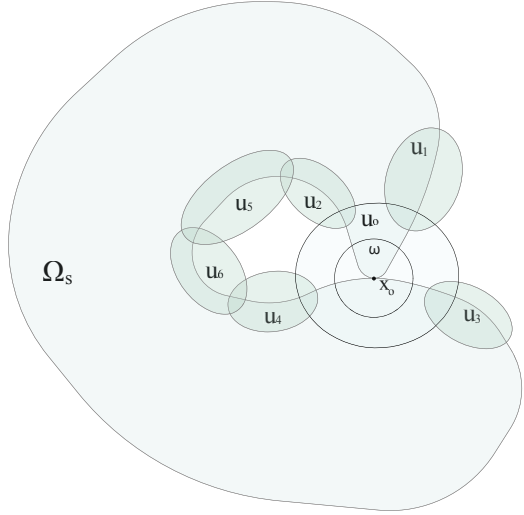}
\caption{Splash domain $\Omega_s$, and the collection of open set $\{U_0,U_1,U_2,...,U_K\}$ covering $\Gamma$.}
\end{center}
\label{fig3}
\end{figure}

\item For each $l\in \{1,...,K\}$, there exists an  $H^{3}$-class diffeomorphism $\theta_l$  satisfying
\begin{gather}
\theta_l : B:=B(0,1) \rightarrow U_l \nonumber \\
U_l \cap \Omega_s = \theta_l ( B^+ )
\ \text{ and } \ \overline{U_l} \cap \Gamma_s = \theta_l ( B^0 ) \,, 
\nonumber
\end{gather}
where 
\begin{align*} 
B^+ &=\{(x_1,x_2,x_3)\in B:  x_3>0\} \,, \\
B^0 &=\{(x_1,x_2,x_3)\in \overline B: x_3=0\}\,.
\end{align*} 

\item For $L > K$, let $\{U_l\}_{l=K+1}^{L}$ denote a family of open sets 
contained in $\Omega_s$ such that 
$\{U_l\}_{l=0}^{L}$ is an open cover of $\Omega_s$, and for $l\in \{K+1,...,L\}$, $\theta_l : B \to U_l$ is an
$H^{3}$ diffeormorphism.

\item To the open set $U_0$ we associate two $H^{3}$-class diffeomorphisms $\theta_+$ and $\theta_-$ of $B$ onto $U_0$ with the following properties:

\begin{alignat*}{2}
\theta_+(B^+) &= U_0^+ \,,          \qquad \qquad           && \theta_-(B^+)= U_0^-   \,,   \\
\theta_+(B^0) & = \overline{U_0^+}\cap \Gamma_s\,,       &&  \theta_-(B^0) = \overline{U_0^-}\cap \Gamma_s\,,
\end{alignat*}
such that
\begin{equation}\nonumber
\{x_0\}=\theta_+(B^0)\cap\theta_-(B^0)\,,
\end{equation} 
and
\begin{equation}\nonumber
\theta_+(0)=\theta_-(0)=x_0\,.
\end{equation}  
We further assume that 
$$ \overline{\theta_\pm(B^+\cap B(0,1/2))} \cap \overline{\theta_l(B^+)} = \emptyset \text{ for } l=1,...,K \,,$$
and
$$ \overline{\theta_\pm(B^+\cap B(0,1/2))} \cap \overline{\theta_l(B)} = \emptyset \text{ for } l=K+1,...,L \,.$$
\end{enumerate}

 \begin{definition}[Splash domain $\Omega_s$]\label{def:splashdomain} 
 We say that $\Omega_s$ is a splash domain, if it is defined by a collection of
 open covers $\{U_l\}_{l=0}^L$ and associated maps $\{\theta_\pm, \theta_1, \theta_2,...,\theta_L\}$ satisfying the
 properties (1)--(5) above.   Because each of the maps is an $H^{3}$ diffeomorphism, we say
that the splash domain $\Omega_s$ defines a self-intersecting {\it generalized} $\bf H^{3}$-domain.
 \end{definition} 

\subsection{An approximating sequence of non self-intersecting domains converging to the splash domain}
 Following \cite{CoSh2014a}, we can then define standard (non self-intersecting) domains $\Omega^\epsilon$ (for $\epsilon>0$ small enough) by just 
 modifying $\theta_\pm$, and leaving the other charts unchanged.  As shown in Figure \ref{fig4}, our non self-intersecting domain $\Omega^\epsilon$ 
 will be defined by associated maps $\{\theta\epsilon_\pm, \theta_1, \theta_2,...,\theta_L\}$ such that 
 \begin{equation}
 \label{thetaeps}
 \|\theta^\epsilon_\pm-\theta_\pm\|_{H^3(B^+)}\le C\epsilon\,,
 \end{equation} 
 and such that  
 \begin{equation}
 \label{thetaepsbis}
 0<d(\theta^\epsilon_+(B^+),\theta^\epsilon_-(B^+))\le \epsilon\,.
 \end{equation}
 
 \begin{figure}[htbp]
\begin{center}
\includegraphics[scale = 0.55]{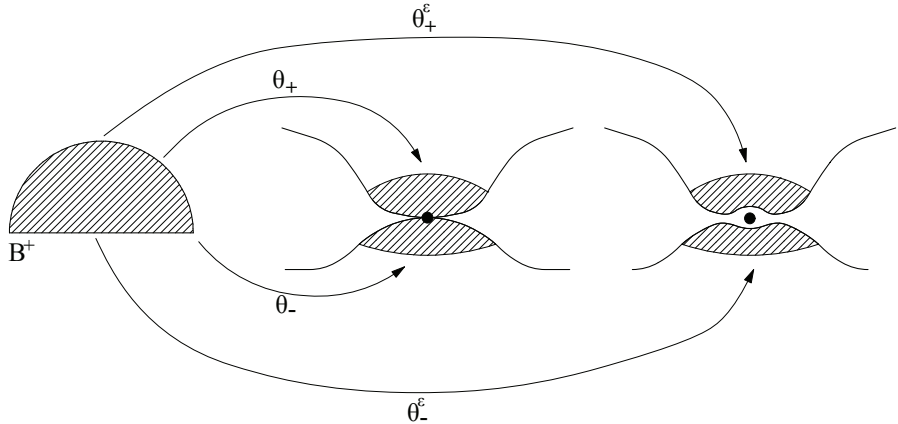}
\caption{The black dot denotes the point $x_0$ where the boundary self-intersects (middle).  For $ \epsilon >0$, the approximate domain $\Omega^ \epsilon $ does not intersect itself (right).}
\end{center}
\label{fig4}
\end{figure}

 In summary, we have approximated the self-intersecting splash domain $\Omega_s$ with a sequence of $H^{3}$-class 
 domains $\Omega^ \epsilon $ converging toward $\Omega$,  such that for each $ \epsilon >0$, $\p \Omega ^ \epsilon $ does not self-intersect.
  As such, 
 each one of these domains $\Omega^ \epsilon $, $ \epsilon >0$,  will thus be amenable to our local-in-time well-posedness theory 
 for free-boundary incompressible Navier-Stokes equations.
 
\section{Existence of a splash in finite time in a domain arbitrarily close to a given splash domain} \label{section10}
 
We next define an initial velocity field of the same type as in Section \ref{sec::u0}. Due to (\ref{thetaeps}), the estimates of Section \ref{section7} remain 
unchanged. Similarly, the main proof of Section \ref{section8} works in a similar manner due to (\ref{thetaepsbis}), leading to the necessity of
 self-intersection at a time $T^\epsilon\in (0,10 \epsilon)$.    Note that since the tangent plane at the intended splash singularity $x_0$  is the horizontal plane 
 $\{ x_3=0\}$,  $\partial [\theta_-( B^+)]$ is very close to $\{x_3=0\}$ in a small ball $B(x_0, \sqrt{ \epsilon })$ for $\epsilon $ taken sufficiently small;
 thus, we are 
 using the fact that the almost flat portion of $\theta_-( B^+)$ is very close to $\{ x_3=0\}$ and contains a region of diameter at least $ \sqrt{ \epsilon }$.

 Furthermore, 
\begin{align}
\|\eta^\epsilon(\theta^\epsilon_\pm, T^\epsilon)-\theta_\pm\|_3& \le  \|\eta^\epsilon(\theta^\epsilon_\pm, T^\epsilon)-\theta^\epsilon_\pm\|_3+\|\theta^\epsilon_\pm-\theta_\pm\|_3\nonumber\\
& \le \|\int_0^{T^\epsilon} v^\epsilon(\theta^\epsilon_\pm,t)\ dt\|_3+C\epsilon\,,
\label{9.1}
\end{align}
where we used  the estimate (\ref{thetaeps}) in the above inequality (\ref{9.1}); hence,   from our estimates in Section \ref{section7},  
\begin{equation}
\|\eta^\epsilon(\theta^\epsilon_\pm,T^\epsilon)-\theta_\pm\|_3  
 \le C\mathcal{P}(M_0) \sqrt{T^\epsilon}+C\epsilon\le C\mathcal{P}(M_0) \sqrt{\epsilon}\,.
\label{9.2}
\end{equation}
 
This, therefore, shows that the splash-free surface $\eta^\eps(\Omega^\epsilon,T^\epsilon)$ is at a distance less than $C\mathcal{P}(M_0) \sqrt{\epsilon}$ from $\Omega_s$ in $H^3$. We have then established the following:

\begin{theorem}\label{thm_general}
For {\it any}  given splash domain $\Omega_s$ of class $H^3$, there exists a splash domain $\tilde \Omega_s$ arbitrarily close in $H^3$
 to $\Omega_s$, and
smooth initial data consisting of a  non self-intersecting domain $\Omega^\epsilon$ of class $H^{3}$ and a divergence-free velocity field 
$u_0^ \epsilon  \in H^3( \Omega^\epsilon )$ satisfying $ [ \operatorname{Def} u_0^ \epsilon \cdot N_\epsilon ] \times N_ \epsilon =0$ on $\partial\Omega^\epsilon$, such that the
flow map $\eta(x, t)$ solving the Navier-Stokes equations (\ref{NSlag}) satisfies $\eta( \p \Omega^\epsilon , T^*) = \tilde \Omega_s$.  That is, in
finite time $T^*>0$, a splash singularity occurs which is very close to a prescribed self-intersecting geometry.
\end{theorem}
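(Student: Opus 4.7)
My plan is to combine the three technical ingredients already in place: the $\epsilon$-independent a priori estimates of Section \ref{section7}, the vertical collision mechanism of Section \ref{section8}, and the $H^3$-closeness (\ref{thetaeps})--(\ref{thetaepsbis}) of the approximating charts. The proof is essentially a transplant of the dinosaur-wave argument to the generalized splash geometry, with only the initial set-up being different, so it consists of checking that each step survives the replacement of the explicit charts from Section \ref{sec::charts} by the abstract charts $\{\theta^\epsilon_\pm, \theta_1, \dots, \theta_L\}$ of Section \ref{subsec:splashdomain}.

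First I would build the initial velocity field in the spirit of Section \ref{sec::u0}. Choose a smooth boundary datum $b_0^\epsilon$ on $\Gamma^\epsilon = \partial \Omega^\epsilon$ that equals $-1$ on a small neighborhood of $\theta^\epsilon_+(0)$ inside $\theta^\epsilon_+(B^0)$, equals $0$ on $\theta^\epsilon_-(B^0)$ and on all other charts, has mean zero over $\Gamma^\epsilon$, and satisfies $\|b_0^\epsilon\|_{2.5,\Gamma^\epsilon}\le m_0$ uniformly in $\epsilon$; uniformity is available because (\ref{thetaeps}) gives a family of charts bounded in $H^3$. Define $u_0^\epsilon$ by solving the Stokes problem (\ref{Stokes2}); the compatibility condition (\ref{comp}) and the uniform bound (\ref{u0-est}) follow exactly as before, since the Stokes constant of Lemma \ref{lemma1} depends only on the uniform chart family.

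Next I would invoke Theorem \ref{prop1} and Proposition \ref{prop2}. The only place the dinosaur-wave geometry enters those arguments is in Lemmas \ref{lemma1}--\ref{lemma3}, whose proofs rely exclusively on a uniformly bounded family of charts; hence the estimate $\sup_{t\in[0,T]}E^\epsilon(t)\le 2M_0$ and the quantitative bound (\ref{cs200}) hold on a time interval $[0,T]$ independent of $\epsilon$. I then run the collision argument of Section \ref{section8}. Because the tangent plane to $\Gamma_s$ at $x_0$ is $\{x_3=(x_0)_3\}$ and $\theta^\epsilon_\pm\to\theta_\pm$ in $H^3\hookrightarrow C^1$, the outward unit normals at $\theta^\epsilon_\pm(0)$ are arbitrarily close to $\mp e_3$, so $u_0^\epsilon\cdot N_\epsilon=-1$ at the top point and $u_0^\epsilon\cdot N_\epsilon=0$ along the nearly flat lower sheet. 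Taking $\epsilon$ so small that $10\epsilon<T$, the estimate (\ref{cs200}) forces the upper Lagrangian particle $\eta^\epsilon(\theta^\epsilon_+(0),10\epsilon)$ to drop by at least $8\epsilon$ vertically while any particle $Z$ on the almost-horizontal lower patch moves vertically by at most $C\epsilon^{5/4}$. Since the uniform $L^\infty$ bound on $v^\epsilon$ limits horizontal drift over $[0,10\epsilon]$ to $O(\epsilon)$, and the $C^1$-flatness of $\theta_-$ at $0$ (inherited by $\theta^\epsilon_-$) guarantees an almost-flat disk of diameter $\sqrt{\epsilon}$ inside $\theta^\epsilon_-(B^0)$ around $x_0$, the vertical line through the falling top particle remains inside the horizontal projection of the lower patch for all $t\in[0,10\epsilon]$. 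Continuity in time then produces a first splash time $T^\epsilon\in (0,10\epsilon)$ at which $\eta^\epsilon(\theta^\epsilon_+(0),T^\epsilon)=\eta^\epsilon(Z,T^\epsilon)$ for some $Z$ on the lower sheet.

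Finally, define $\tilde\Omega_s$ by the charts $\tilde\theta_\pm:=\eta^\epsilon(\theta^\epsilon_\pm,T^\epsilon)$ together with the unchanged $\theta_l$, $1\le l\le L$. The $H^3$-closeness of $\tilde\Omega_s$ to $\Omega_s$ is then the computation (\ref{9.1})--(\ref{9.2}) already in the excerpt:
\[
\|\tilde\theta_\pm-\theta_\pm\|_3 \;\le\; \Bigl\|\int_0^{T^\epsilon} v^\epsilon(\theta^\epsilon_\pm,t)\,dt\Bigr\|_3 + C\epsilon \;\le\; C\mathcal{P}(M_0)\sqrt{T^\epsilon}+C\epsilon \;\le\; C\mathcal{P}(M_0)\sqrt{\epsilon}.
\]
I expect the main obstacle to be Step three, namely rigorously justifying that the horizontal projection of the falling top particle stays over the nearly-flat lower patch for the whole interval $[0,10\epsilon]$: this requires the flat-region size $\sqrt{\epsilon}$ to dominate the horizontal drift $O(\epsilon)$, and relies on the $C^1$-control of $\theta_-$ at $0$ coming from the $H^3$ hypothesis through the Sobolev embedding. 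Once that geometric check is in place, everything else is a direct application of the Section \ref{section7}--\ref{section8} machinery.
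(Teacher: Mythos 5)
Your proposal is correct and follows essentially the same route as the paper: construct $u_0^\epsilon$ via the Stokes problem of Section \ref{sec::u0}, observe that the uniform chart bounds (\ref{thetaeps}) make the Section \ref{section7} estimates and the Section \ref{section8} collision argument carry over (using the nearly flat $\sqrt{\epsilon}$-sized patch of $\theta_-(B^0)$ near $x_0$), and then conclude $H^3$-closeness of the resulting splash domain from (\ref{9.1})--(\ref{9.2}). Your write-up simply spells out details (uniform normals, horizontal drift versus flat-patch size) that the paper leaves implicit.
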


\section*{Acknowledgments}
DC was supported by the Centre for Analysis and Nonlinear PDEs funded by the UK EPSRC grant EP/E03635X and the Scottish Funding Council.  SS was supported by the National Science Foundation under grant DMS-1301380
and by the Royal Society Wolfson Merit Award.

\end{document}